\newtheorem{thm}{Theorem}[section]
\newtheorem{prop}[thm]{Proposition}
\newtheorem{lem}[thm]{Lemma}
\newtheorem{cor}[thm]{Corollary}
\newtheorem{rem}[thm]{Remark}
\newcommand{\NN}{\mathbb{N}}
\newcommand{\ZZ}{\mathbb{Z}}
\newcommand{\RR}{\mathbb{R}}
\newcommand{\kw}{\psi_{k,K_0\mu}}
\newcommand{\kww}{\psi_{k,K_0\mu'}}
\newcommand{\norm}[1]{\left\lVert#1\right\rVert}
\newcommand{\fspq}{F^s_{p,q}}
\newcommand{\fspqq}{F^{-1/q'-n}_{p,q}}
\DeclarePairedDelimiter{\abs}{\lvert}{\rvert}
\newcounter{@ToDo}
\newcommand{\todo@helper}[1]{%
	({\color{blue}TODO~\arabic{@ToDo}: {#1\@addpunct{.}}})%
}
\newcommand{\todo}[1]{\stepcounter{@ToDo}%
	\relax\ifmmode\text{\todo@helper{#1}}%
	\else\todo@helper{#1}\fi%
}
\title[Orthogonal Systems of Spline Wavelets as Unconditional Bases]
{Orthogonal Systems of Spline Wavelets as Unconditional Bases in Sobolev Spaces}
\date{\today}
\author{Rajula~Srivastava}
\address{Rajula Srivastava\\
Department of Mathematics\\ 
University of Wisconsin-Madison\\
480 Lincoln Dr, Madison\\
WI-53706\\
USA}
\email{rsrivastava9@wisc.edu}
\begin{document}
\begin{abstract}
	We exhibit the necessary range for which functions in the Sobolev spaces $L^s_p$ can be represented as an unconditional sum of orthonormal spline wavelet systems, such as the Battle-Lemari\'e wavelets. We also consider the natural extensions to Triebel-Lizorkin spaces. This builds upon, and is a generalization of, previous work of Seeger and Ullrich, where analogous results were established for the Haar wavelet system.
\end{abstract}
\maketitle

\section{Introduction}
It is well known that unlike the trigonometric system, the Haar system forms an unconditional basis in $L^p[0,1]$ for all $1<p<\infty$ (see [\cite{marcinkiewicz1937quelques}). In this article, we aim to explore the analogous problem in the case of Sobolev (and Triebel-Lizorkin) spaces. More precisely, we seek to answer the following question: for what Sobolev spaces does a given orthonormal spline wavelet system form an unconditional basis? 

Let $n\in \NN\cup \{0\}$. We consider an orthogonal spline system on the real line, characterized by a scaling function $\Psi_n$ and an associated wavelet $\psi_n$ (both real valued) with the following properties:
\begin{enumerate}[label=(\Alph*)]
    \item \label{prop smoothness} $\Psi_n,\psi_n \in C^{n-1}(\RR)$ (no condition for $n=0$).
    \item \label{prop restriction poly} The restriction of $\Psi_n,\psi_n$ to each interval $\left(j,j+\frac{1}{2}\right)$ (for $j\in \ZZ/2$) is a polynomial of degree at most $n$.
    \item \label{prop exp decay}
    When $n>0$, there exist positive constants $C$ and $\gamma$ (depending on $n$) such that \[
    \abs{\Psi^{(\alpha)}_n(x)}+\abs{\psi^{(\alpha)}_n(x)}\leq Ce^{-\gamma\abs{x}} \text{ for all } 0\leq \alpha\leq n-1.    
    \]
    \item \label{prop moment cancel}
    \[\int x^{M}\psi_n(x)\,dx=0, \text{ for } M=0,1,\ldots,n.\]
\end{enumerate}
We say that $\psi_n$ is of order $n$.
When $n=0$, the Haar wavelet is perhaps the simplest and the most famous example of this type, with 
\[\Psi_0(x)=\mathds{1}_{[0,1]},\;\psi_0(x)=\mathds{1}_{[0,\frac{1}{2}]}(x)-\mathds{1}_{[\frac{1}{2},1]}(x),\]
where $\mathds{1}_{[a,b]}$ denotes the characteristic function of the interval $[a,b]$. More generally, for $n\geq 0$, the Battle-Lemari\'e wavelets (constructed independently by Battle [\cite{battle1987block} and Lemari\'e [\cite{lemarie1988ondelettes}, also investigated by Mallat [\cite{mallat1989multiresolution}) are well-known examples of such a system. 

For $k\in \NN\cup\{0\}$ and $\mu\in\ZZ$, we define \[\psi_{n,k,\mu}:=\psi_n(2^k\cdot-\mu)\;\text{ and }\; \psi_{n,-1,\mu}:=\sqrt{2}\Psi_n(2^k\cdot-\mu).\]
The Battle-Lemari\'e wavelets form an example of what is known as a multiresolution analysis in wavelet theory. We refer the interested reader to standard texts like [\cite{daubechies1992ten}, Section 5.4 and [\cite{wojtaszczyk1997mathematical}, Section 3.3 for a more thorough discussion and actual construction of these wavelet systems. For our purposes, it is sufficient to know that they satisfy the properties \ref{prop smoothness}-\ref{prop moment cancel} above. One must think of the Battle-Lemari\'e system of order $n$ as an "orthonormalized" wavelet version of the $n$-th order cardinal spline $N_n$, recursively defined by the relation $N_0=\mathds{1}_{[0,1]}$, and
\[N_n(x)=(N_{n-1}*N_1)(x),\]
for $n\geq 1$.
In particular, the system 
\begin{equation}
\label{wavelet system}    
    \mathcal{W}_n:=\{2^{k/2}\psi_{n,k,\mu}:\,k\in \NN\cup\{-1\},\mu\in \ZZ\}
\end{equation}
forms an orthornormal basis in $L^2(\RR)$.

We remark here that there also exist other (non-orthogonal) wavelet systems which generalize the idea of B-splines, such as
\begin{itemize}
    \item Chui-Wang wavelets: These wavelets, constructed independently by Chui-Wang [\cite{chui1992compactly} and Unser-Aldroubi-Eden [\cite{unser1992asymptotic}, retain inter-scale orthogonality and are compactly supported.
    \item Bi-orthogonal wavelets: Introduced by Cohen-Daubechies-Feauveau [\cite{cohen1992biorthogonal}, these wavelets are compactly supported, symmetric and regular, but non-orthogonal, with a dual basis generated by another compactly supported wavelet.
\end{itemize}
We refer the reader to [\cite{unser1997ten} for a concise introduction and comparison. In this article, we will focus on orthogonal wavelet systems, although it might be possible to adapt some of our ideas  to the aforementioned systems as well.

Triebel ([\cite{triebel1978haar}, [\cite{triebel2010bases}) showed that the Haar system forms an unconditional basis in Besov spaces $B^s_{p,q}$ if $1<p,q<\infty$ and $-1/p'<s<1/p$ (also see [\cite{ropela1976spline}). The endpoint case $s=1/p$ (and the dual case $s=-1/p'$) can be excluded by noting that all the Haar functions belong to $B^{1/p}_{p,q}$ if and only if $q=\infty$. 

In the case of Sobolev and Triebel-Lizorkin spaces, we have a dependence on the secondary integrability parameter $q$ as well. More precisely, it was shown by Triebel [\cite{triebel2010bases} that the Haar system forms an unconditional basis in the Sobolev spaces $L^s_{p} (1<p<\infty)$ when $\max{\{-1/p',-1/2\}}<s<\min{\{1/p,1/2\}}$ (recall that the norm in $L^s_p$ is given by $\norm{f}_{L^s_p}=\norm{D^sf}_{L^p}$ where $D^sf=\mathcal{F}^{-1}[(1+|\xi|^2)^{s/2}\hat{f}(\xi)])$. It had been an open question if the Haar system formed an unconditional basis in $L^s_p$ in the range $1/2\leq s\leq 1/p$ (for $1<p<2$) and $-1/p'\leq s\leq -1/2$ (when $2<p<\infty$). This was answered in the negative in [\cite{seeger2017haar}, where Seeger and Ullrich established that the aforementioned sufficient condition is also a necessary one. In fact, in [\cite{seeger2017haar}, the question was settled for the general class of Triebel-Lizorkin spaces $F^s_{p,q}$ (we recall that by Littlewood-Paley theory, $L^s_p=F^s_{p,2}$ for $s\in \RR$ and $1<p<\infty$). In a series of follow up papers, Garrig\'os, Seeger and Ullrich also established slightly better necessary and sufficient ranges for suitable enumerations of the Haar system to form a Schauder basis in Besov and Triebel-Lizorkin spaces (see [\cite{garrigos2018haar}), including the limiting case for the former (in [\cite{garrigos2019basis}) and the endpoint case for the latter (see [\cite{garrigos2019haar}).

It is clear from the above discussion that the Haar system is not a good candidate for an unconditional basis in function spaces of higher order smoothness. This is because the Haar wavelet has poor regularity (it fails to be even continuous). Hence, we turn our attention to orthonormal spline wavelet systems satisfying properties \ref{prop smoothness}-\ref{prop moment cancel}. For such systems, Bourdaud [\cite{bourdaud1995ondelettes}  and Triebel [\cite{triebel2010bases} proved results analogous to the Haar case for Besov spaces, with a shift in the range of the smoothness parameter domain corresponding to the shift in regularity of the basis functions. More precisely, they proved that the system $\mathcal{W}_n$ forms an unconditional basis in $B^s_{p,q}$ if $1<p,q<\infty$ and $-n-1/p'<s<n+1/p$. This range is also optimal, for $\psi_n \notin B^s_{p,q}$ for $s\geq n+1/p$ or for $s\leq -n-1/p'$. 

Coming to the case of the Sobolev and Triebel-Lizorkin spaces, Triebel ([\cite{triebel2010bases}, Theorem 2.49, (ii)) showed that the system $\mathcal{W}_n$ (generated by a spline wavelet $\psi_n$ of order $n$, satisfying properties \ref{prop smoothness}-\ref{prop moment cancel}) forms an unconditional basis in  $F^s_{p,q} (1<p,q<\infty)$ when
\begin{equation}
\max{\{-1/p',-1/q'\}}-n<s<\min{\{1/p,1/q\}}+n   
\end{equation} 
(see [\cite{sickel1990spline} for related results for splines). It is an open question if in this case too, the aforementioned sufficient condition is also necessary and in particular, whether the system $\mathcal{W}_n$ is an unconditional basis on the Sobolev space $L^s_p$ for the ranges $1<p<2$, $n+1/2\leq s\leq n+1/p$ and $2<p<\infty$, $-n-1/p'\leq s\leq -n-1/2$ (see figure \ref{fig:my_label}). We answer this question in the negative.

Our main result is the following:
\begin{thm}
\label{main qual thm}
Let $n\in \NN\cup\{0\}$ and $1<p,q<\infty$. The system $\mathcal{W}_n$ (as defined in \eqref{wavelet system}) is an unconditional basis in $F^s_{p,q}$ only if 
\[\max\{-1/p',-1/q'\}-n<s<\min\{1/p,1/q\}+n.\]
\end{thm}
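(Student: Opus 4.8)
The plan is to prove the contrapositive: if $s$ does not lie in the open interval $(\max\{-1/p',-1/q'\}-n,\ \min\{1/p,1/q\}+n)$, then $\mathcal{W}_n$ is not an unconditional basis of $F^s_{p,q}$. \emph{A reduction by duality.} Since $1<p,q<\infty$, the space $F^s_{p,q}$ is reflexive with dual $F^{-s}_{p',q'}$, and by the $L^2$-orthonormality of $\mathcal{W}_n$ its biorthogonal functionals are, up to normalisation, the elements of $\mathcal{W}_n$ itself, now viewed in $F^{-s}_{p',q'}$. Hence $\mathcal{W}_n$ is an unconditional basis of $F^s_{p,q}$ if and only if it is one of $F^{-s}_{p',q'}$, and it suffices to rule out the two upper endpoints
\[
\mathrm{(I)}\quad s\ge n+\tfrac1p,\qquad\qquad \mathrm{(II)}\quad s\ge n+\tfrac1q,
\]
the two lower endpoints $s\le-n-1/p'$ and $s\le-n-1/q'$ then following by applying $\mathrm{(I)}$ and $\mathrm{(II)}$ with $(p,q,s)$ replaced by $(p',q',-s)$; the union of these four half-lines is precisely the complement of the interval above. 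Finally, writing $\{\phi_{k,\mu}\}$ for the $L^2$-normalised system $\mathcal{W}_n$, an unconditional basis has uniformly bounded coordinate projections $P_E f:=\sum_{(k,\mu)\in E}\langle f,\phi_{k,\mu}\rangle\phi_{k,\mu}$ over all finite index sets $E$; so in each case it is enough to produce $f\in F^s_{p,q}$ with $\|f\|_{F^s_{p,q}}\lesssim1$ together with finite sets $E_N$ (equivalently sign patterns $\varepsilon^{(N)}$ with values $\pm1$) for which $\|P_{E_N}f\|_{F^s_{p,q}}\to\infty$ as $N\to\infty$.

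\emph{Case \textup{(I)}.} Here unconditionality is not needed: the key point will be that $\psi_n\notin F^{n+1/p}_{p,q}$, so that $\mathcal{W}_n\not\subset F^s_{p,q}$ and cannot be a basis. By \ref{prop smoothness}, \ref{prop restriction poly} (together with \ref{prop exp decay} when $n\ge1$), $\psi_n$ is a nonzero function vanishing at infinity whose restriction to each interval $(j,j+\tfrac12)$ is a polynomial of degree at most $n$ and which is of class $C^{n-1}$; hence its $n$-th derivative is a bounded, piecewise constant function which cannot be constant (else $\psi_n$ would be a polynomial, contradicting its decay), and therefore jumps at some knot $x_0$. Localising there one may write $\psi_n(x)=P(x)+c\,(x-x_0)_+^{\,n}$ with $P$ a polynomial and $c\ne0$; since $(\cdot-x_0)_+^{\,n}$ belongs to $B^{n+1/p}_{p,\infty}\setminus B^{n+1/p}_{p,r}$ for every finite $r$, while $F^{n+1/p}_{p,q}\hookrightarrow B^{n+1/p}_{p,\max(p,q)}$ with $\max(p,q)<\infty$, one concludes $\psi_n\notin F^{n+1/p}_{p,q}$, and the embedding $F^s_{p,q}\hookrightarrow F^{n+1/p}_{p,q}$ for $s\ge n+1/p$ settles this case. (For $n=0$ this is just the fact that a single Haar function fails to lie in $F^{1/p}_{p,q}$ when $q<\infty$.)

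\emph{Case \textup{(II)}.} One may assume $q>p$ and $n+1/q\le s<n+1/p$ (otherwise $\mathrm{(II)}$ is contained in $\mathrm{(I)}$), so that $\mathcal{W}_n\subset F^s_{p,q}$. Following the strategy of Seeger--Ullrich, I would build the test functions as superpositions
\[
f_N=\sum_{(k,\mu)\in\mathcal{R}_N}c_{k,\mu}\,\phi_{k,\mu}
\]
over a large two-dimensional region $\mathcal{R}_N$ of the index plane --- finitely many scales and, crucially, many translates at each scale, so that the local singularities of $f_N$ accumulate --- with coefficients calibrated at the critical rate $|c_{k,\mu}|\sim2^{-k(s+1/2)}$ dictated by the (would-be) wavelet characterisation of $F^s_{p,q}$. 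One then verifies $\|f_N\|_{F^s_{p,q}}\lesssim1$ uniformly in $N$, while a sign pattern $\varepsilon^{(N)}$ chosen to destroy the cancellation between neighbouring scales --- equivalently, a projection onto a suitable ``skew'' subset of $\mathcal{R}_N$ --- forces the $F^s_{p,q}$-norm of the modified sum to tend to infinity with $N$. The underlying point is that at these endpoint values of $s$ the $F^s_{p,q}$-norm is no longer comparable, on the span of $\mathcal{W}_n$, to the natural sequence norm of the coefficients; the discrepancy is governed by the Besov/Triebel--Lizorkin embeddings and by the gap between $p$ and $q$, and it is exactly this discrepancy that a sign change can turn into genuine divergence. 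Single-scale or single-location configurations do not see the obstruction, which is why $\mathcal{R}_N$ must be genuinely two-dimensional.

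\emph{The main obstacle.} The heart of the matter is the pair of matching two-sided estimates --- $\|f_N\|_{F^s_{p,q}}\lesssim1$ and $\|\sum_{(k,\mu)}\varepsilon^{(N)}_{k,\mu}c_{k,\mu}\phi_{k,\mu}\|_{F^s_{p,q}}\to\infty$ --- with constants explicit or uniform in $N$. Unlike for the Haar system, the spline wavelets are not compactly supported, so the $\phi_{k,\mu}$ interact across all pairs of scales, not merely through nested dyadic intervals; one must combine the pointwise bounds of property \ref{prop exp decay} with the Fourier decay $|\widehat{\psi_n}(\xi)|\lesssim(1+|\xi|)^{-n}$ (which follows from \ref{prop smoothness}--\ref{prop restriction poly}) to localise the Littlewood--Paley pieces of $f_N$ and of its sign-modification while keeping the estimates uniform in $N$. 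Showing that the modified sum truly leaves $F^s_{p,q}$, rather than merely violating the naive characterisation, is the delicate step, and is where the region $\mathcal{R}_N$ and the signs $\varepsilon^{(N)}$ must be engineered with care.
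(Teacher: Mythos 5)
Your duality reduction and Case (I) are correct and agree with the paper's: $(\fspq)^*=F^{-s}_{p',q'}$ lets you work with the upper half-lines, and for $s\ge n+1/p$ the fact that $\psi_n\notin F^{n+1/p}_{p,q}$ (which the paper asserts without proof but you verify cleanly via the jump of $\psi_n^{(n)}$, the local model $(\cdot-x_0)^n_+\in B^{n+1/p}_{p,\infty}\setminus B^{n+1/p}_{p,r}$, and the embedding $F^{n+1/p}_{p,q}\hookrightarrow B^{n+1/p}_{p,\max(p,q)}$) already rules out basis-hood. So far so good.

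But Case (II), the range $n+1/q\le s<n+1/p$ with $p<q$ (dually $-n-1/p'<s\le -n-1/q'$ with $q<p$), is where essentially all of the paper lives, and your treatment of it is an announcement of a plan rather than a proof, with several concrete gaps. You propose to take the test function to be a wavelet sum $f_N=\sum c_{k,\mu}\phi_{k,\mu}$ with coefficients at the critical rate and then perturb signs; but establishing $\|f_N\|_{F^s_{p,q}}\lesssim1$ for a wavelet sum at exactly the smoothness where the wavelet characterisation of $F^s_{p,q}$ breaks down is itself the phenomenon under study, and nothing in the sketch controls this norm uniformly in $N$. The paper sidesteps this: its test functions are built from well-separated translates of a fixed $C^\infty$ bump $\eta$ with $n+3$ vanishing moments (not from the wavelets), and their uniform $F^s_{p,q}$-boundedness is the content of Proposition 4.1, proved via smooth atoms and a dyadic Fefferman--Stein $BMO$ interpolation that uses $q\le p$; your outline has no analogue of this. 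The lower bound in the paper then comes from a two-parameter randomisation (random signs on the scales of the test function \emph{and} on the projections) and a double Khinchine argument, resting on a package of structural lemmas specific to the spline wavelets: equality of the sub-leading coefficients across knots and exponential decay of the leading coefficients $A^n_\theta$ (Lemma 3.1), a non-vanishing lemma for $\phi*\psi$ on a fixed subinterval (Lemma 6.3), two-sided bounds for $2^k\langle\Upsilon_k,\psi_{k,K_0\mu}\rangle$ (Proposition 6.4) and a tail estimate for $\phi_k*\psi_{k,K_0\mu'}$ at separated $\mu'$ (Proposition 6.5). These are the genuinely new ingredients relative to the Haar case, precisely because $\psi_n$ is not compactly supported, and none of them appear in your proposal. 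Finally, your sketch makes no distinction between the open range $n+1/q<s<n+1/p$ and the endpoint $s=n+1/q$; the paper's single-scale construction degenerates at the endpoint and has to be replaced by a multi-scale superposition inside each of $4^N$ blocks (Section 8). In short: the reduction and Case (I) are sound, but Case (II) is left at the level of strategy, and the boundedness lemma, the spline-structure lemmas, the convolution/inner-product estimates, and the endpoint refinement --- i.e.\ the substance of the argument --- are missing.
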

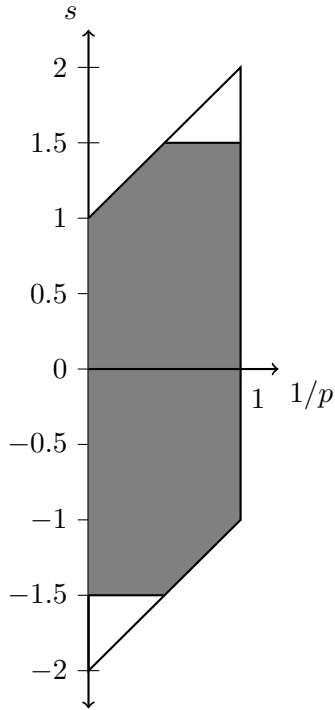
\begin{figure}
    \centering
    \begin{tikzpicture}[scale=2]
\draw[step=1cm,thick,->] (0,0) -- (1.25,0) node[anchor=north west] {$1/p$};
\draw[step=1cm,thick,<->] (0,-2.25) -- (0,2.25) node[anchor=south east] {$s$};
\foreach \x in {0.5,1}
   \draw (\x cm,2pt) -- (\x cm,-2pt) node[anchor=north west] {$\x$};
\foreach \y in {-2,-1.5,-1,-0.5,0,0.5,1,1.5,2}
    \draw (2pt,\y cm) -- (-2pt,\y cm) node[anchor=east] {$\y$};
\draw[thick,fill=gray] (0,1) -- (0.5,1.5)--(1,1.5)--(1,-1)--(0.5,-1.5)--(0,-1.5); 
\draw[thick] (0.5,1.5)--(1,2)--(1,1.5);
\draw[thick] (0.5,-1.5)--(0,-2)--(0,-1.5);

\draw[thick] (0,0) -- (1,0); 
\end{tikzpicture}
    \caption{Domain for an unconditional basis in $L^s_p$ spaces for spline wavelets of order 1.}
    \label{fig:my_label}
\end{figure}

\begin{rem}
Since the Haar system corresponds to the case $n=0$, our result is a generalization of the one in [\cite{seeger2017haar} to orthogonal spline wavelet systems of arbitrary order.
\end{rem} 

Following [\cite{seeger2017haar}, we introduce a suitable framework to quantify the failure of unconditional convergence. For $k\geq 0$, we define the spline wavelet frequency of $\psi_{n,k,\mu}$ to be $2^k$. For any subset $E$ of the system $\mathcal{W}$ let $SF(E)$ denote the spline wavelet frequency set of $E$. In other words,
\[SF(E)=\{2^k:k\geq 0, \text{ there exists }\mu\in \ZZ \text{ with } \psi_{n,k,\mu}\in E\}.\]
We denote by $P_E$ the orthogonal projection to the subspace spanned by $\{g:g\in E\}$ (which is closed in $L^2(\RR))$. In particular, for a Schwartz function $f$,
\[P_Ef=\sum_{\psi_{n,k,\mu}\in E}\langle f,\psi_{n,k,\mu}\rangle\psi_{n,k,\mu}.\] Now for any $A\subset \{2^j:j=0,1,\ldots\}$, we set
\begin{equation}
    \mathcal{G}(\fspq, A)=\sup\{\|P_E\|_{F^s_{p,q}}:SF(E)\subset A\}.
\end{equation}
For $\Lambda\in \NN$, we define the lower wavelet projection number
\begin{equation}
    \label{eq lower proj numbers}
    \gamma_*(\fspq, \Lambda)=\inf\{\mathcal{G}(
    \fspq, A):\#A\geq \Lambda\}.
\end{equation}
As $\psi_{n,k,\mu}\notin F^s_{p,q}$ for $s\geq 1/p+n$, we have that $\gamma_*(\Lambda)=\infty$. By duality, $\gamma_*(\Lambda)=\infty$ for $s\leq -n+1/p'$. In our discussion throughout, we shall assume that $\Lambda>10$. 

The approach used in [\cite{seeger2017haar} to establish the necessary range for unconditional convergence in the case of the Haar basis was the quantification of the growth rate $\mathcal{G}(\fspq, A)$ in terms of the cardinality of $A$. In particular, to give precise lower bounds for $\gamma_*(\fspq, \Lambda)$, the authors constructed a suitable test function, by first considering a sum of the translates of a smooth compactly supported function $\eta$ at a fixed dyadic scale and then taking a randomized sum of the functions hence constructed at different scales, dictated by the frequency of the given set $A$. The Sobolev (or Triebel-Lizorkin) norm of the test function was controlled by introducing enough separation between the translates at the same scale (Proposition \ref{prop tech}). This separation and the compact support of the Haar wavelet was also used to ensure that at each scale, a given translate of the wavelet interacted with exactly one translate of the test function. Finally, by choosing $\eta$ to be an odd function and exploiting the anti-symmetry of the Haar wavelet (with respect to the midpoint of the associated interval), the authors were able to avoid cancellation and get the different interactions to add up, yielding the desired lower bounds.

In this paper, we use the same example as above, and verify that this approach also works for the wavelet systems we consider. In [\cite{seeger2017haar}, the authors had the advantage of working with the Haar wavelet, which can be written down in a very simple closed form and is compactly supported. However, in our article we do not use any explicit formulas for the wavelets (which can get very tedious as the order increases). Neither are our wavelets compactly supported. The novelty of this paper lies in identifying the properties hidden behind the deceptively simple form of the Haar wavelet, which make the example in [\cite{seeger2017haar} work, and adapting them to our setting. Moreover, exponential decay (property \ref{prop exp decay}) is only slightly worse than being compactly supported, and can be essentially dealt with by increasing the separation between the different translates. Consequently, we obtain some tail terms (absent in [\cite{seeger2017haar}), which need to be carefully considered. 

\subsection*{Notation} We shall use the notation $A\lesssim B$, or $B\gtrsim A$, if $A\leq CB$ for a positive constant depending only on $p,q,s$ and the wavelet $\psi_n$ under consideration. Also, if both $A\lesssim B$ and $B\lesssim A$, we shall use the notation $A\approx B$.

\subsection*{Plan of the paper}
In Section \ref{sec fspq spaces}, we briefly discuss the characterization of Triebel-Lizorkin spaces via compactly supported local means, which is quite suitable for our problem. In Section \ref{sec wavelet properties}, we reformulate the properties of the orthogonal spline wavelets in a quantitative form. In Section \ref{sec test func 1}, we state and prove a technical lemma. This is in preparation for defining a suitable family of test functions in $\fspq$, which we do for $p>q$ and $s\leq -1/q'-n$ in Section \ref{sec test func}. In Section \ref{sec prelim estimates}, we establish a few preliminary estimates and lower bounds for the interactions of the test functions with the members $\psi_{n,k,\mu}$ of the wavelet family. In Sections \ref{sec lower bounds} and \ref{sec end point}, we prove the existence of the desired lower bounds for $\gamma_*(\fspq,\Lambda)$ when $s<-1/q'$ and $s=1/q'$, respectively.

\subsection*{Acknowledgements} The author would like to thank her advisor Andreas Seeger for introducing this problem, for his guidance and several illuminating discussions. She is also grateful to Dr. Nadiia Derevianko and Prof. Tino Ullrich for sharing a copy of their manuscript [\cite{DerUll19}, which helped shape some of the ideas contained here. Research supported in part by NSF grant 1500162.

\section{Some Background on Triebel-Lizorkin Spaces}
\label{sec fspq spaces}
We briefly discuss the characterization of Triebel-Lizorkin spaces via "local means" (termed so in [\cite{triebeltheory}, Section 2.4.6) which will be useful for our purposes. 

The usual way to define Triebel-Lizorkin spaces is via a smooth dyadic decomposition of unity. Let $\varphi_0$ be a smooth function supported in $[-3/2,3/2]$ such that $\varphi_0\equiv 1$ on $[-4/3,4/3]$. We set $\varphi=\varphi_0(\cdot)-\varphi_0(2\cdot)$, so that $\varphi_0+\sum_{k\in \NN}\varphi_0(2^{-k}\cdot)\equiv 1$. Defining $\widehat{L_0f}=\varphi_0\hat{f}$ and $\widehat{L_kf}=\varphi(2^{-k}\cdot)\hat{f}$ for a Schwartz function $f$, we obtain an inhomogenous Littlewood-Paley decomposition
\begin{equation}
\label{littlewood-paley}    
f=\sum_{k=0}^{\infty} L_Kf,
\end{equation}
with convergence in $\mathcal{S}'(\RR)$ and in all $L^p$ spaces. For $0<p<\infty, \,0<q\leq \infty$, and $s\in \RR$, the Triebel-Lizorkin space $\fspq(\RR)$ is defined as the collection of all tempered distributions $f\in \mathcal{
S}'(\RR)$ such that
\begin{equation}
\label{triebel lizorkin definition}  
\|f\|_{\fspq}=\bigg\|\bigg(\sum_k 2^{ksq} |L_kf|^q\bigg)^{1/q}\bigg\|_{L^p}<\infty,
\end{equation}
with the usual modification when $q=\infty$.
We now define another pair of functions $\phi_0$ and $\phi$ such that $|\widehat{\phi_0}|>0$ on $(-\epsilon,\epsilon)$ and $|\widehat{\phi}|>0$ on the set $\{\xi: \epsilon/4<|\xi|<\epsilon\}$. We also assume that
\begin{equation}
\label{eq local means moment cancel}
\int \phi(y)y^m\,dy=0,    
\end{equation}
for $m=0,\ldots, M_1$ where $M_1\in \NN$ is such that $M_1+1>s$. It can be proved using vector valued singular integrals (see [\cite{triebeltheory}, Section 2.4.6) that
\begin{equation}
\label{local means char}    
\|f\|_{\fspq}\approx \bigg\|\bigg(\sum_k 2^{ksq} |\phi_k*f|^q\bigg)^{1/q}\bigg\|_{L^p},
\end{equation}
with $\phi_k(x)=2^k\phi(2^x)$. The above characterization allows for compactly supported $\phi$ and $\phi_0$, termed as "local means". 

\section{Properties of the Orthogonal Spline Wavelets}
\label{sec wavelet properties}
For $k,\mu\in\ZZ$, we define
\begin{equation}
I_{k,\mu}:=[2^{-k}\mu, 2^{-k}(\mu+1)],   
\end{equation}
and
\begin{equation}
x_{k,\mu}:=2^{-k}\mu + 2^{-k+1}.   
\end{equation}
The Haar wavelet generates a system that can be easily written down explicitly. Unfortunately, these formulas become extremely complicated when $n>0$. Moreover, $\psi_n$ is no longer compactly supported in this case. However, on a closer inspection, one can isolate the primary properties of the Haar system on which the arguments in [\cite{seeger2017haar} are based. These are: 
\begin{enumerate}[label=(\alph*)]
    \item Each $\psi_{0,k,\mu}$ is supported on the dyadic interval $I_{k,\mu}$.
    \item $\psi_{0,k,\mu}$ is anti-symmetric around $x_{k,\mu}$.  
\end{enumerate}
The test functions are then constructed by taking a sum of compactly supported functions $\eta_{k,\mu}$ centred around $x_{k,\mu}$ for $0\leq \mu\leq 2^k-1$. The first property ensures enough separation so that each wavelet translate $\psi_{0,k,\mu_0}$ interacts with exactly one translate of $\eta$ at scale $2^{-k}$, namely $\eta_{k,\mu_0}$. The second property is exploited by considering $\eta_{k,\mu}$ to be \textit{odd}, so that the contributions from both halves of $I_{k,\mu}$ get added up.  

In our case, even though $\psi_n$ does not have compact support, it is only slightly worse: $\psi_{n,k,\mu}$ (and its derivatives) decay exponentially off of $I_{k,\mu}$ (property \ref{prop exp decay}). Thus, by introducing enough separation (as determined by the decay rate), we can still ensure that the interaction of $\eta_{k,\mu}$ with $\psi_{n,k,\mu'}$ is negligibly small when $\mu\neq \mu'$. 

Speaking of symmetry, although the Battle-Lemari\'e wavelet of order $n$ is known to be symmetric (anti-symmetric) around $1/2$ when $n$ is odd (even), we don't rely on this property in our argument, in order to make it applicable to general settings. Let us consider the unit interval $[0,1]$ (for the other dyadic intervals can be obtained from this case by appropriate scaling and translation). $\psi_n$ is represented by (different) polynomials of degree $n$ on $[0,1/2]$ and $[1/2,1]$. However, the condition that $\psi_n\in C^{n-1}$ forces the non-leading left and right co-efficients to be equal. This takes care of the cancellation of the lower order polynomial terms, provided the lower moments of the test function disappear, which is indeed chosen to be so by construction. Further, by considering a translation of $\psi_n$, if necessary, we can assume that the leading co-efficients of the left and right polynomial representation of $\psi_n$ around $1/2$ are not equal. Then by choosing a test function $\eta$ such that $y^n\eta(y)$ is odd, we can still get the interactions to add up, yielding non-zero lower bounds. In the endpoint case, we use a slight generalization of this idea, choosing $\eta$ to be even or odd depending on the signs of the leading co-efficients of $\psi_n$ around $0$ with respect to each other.

In the paper henceforth, $n\in \NN$ shall remain fixed and be understood from the context. Consequently, we denote $\psi_n(x)$ by $\psi(x)$ and $\psi_{n,k,\mu}$ by $\psi_{k,\mu}.$ The following lemma is a quantitative interpretation of Properties \ref{prop smoothness} and \ref{prop exp decay}. 
\begin{lem}
\label{lemma wavelet prop}
Let $\theta\in\ZZ$. Suppose $\psi$ is represented by
\begin{equation}
\label{left wave form}
\psi(x)=A^n_{\theta-1}\left(x-\frac{\theta}{2}\right)^n+A^{n-1}_{\theta-1}\left(x-\frac{\theta}{2}\right)^{n-1}+\ldots+A^0_{\theta-1}
\end{equation}
on $[\frac{\theta-1}{2}, \frac{\theta}{2}]$, and
\begin{equation}
\label{right wave form}
\psi(x)=A^n_{\theta}\left(x-\frac{\theta}{2}\right)^n+A^{n-1}_{\theta}\left(x-\frac{\theta}{2}\right)^{n-1}+\ldots+A^0_{\theta}
\end{equation}
on $[\frac{\theta}{2}, \frac{\theta+1}{2}]$. Then we have
\begin{enumerate}[label=(\roman*)]
    \item \[A^{j}_{\theta-1}=A^{j}_{\theta}\] for $j=0,1,\ldots,n-1$.
    \label{coeff lower}
    \item \[|A^n_\theta|\leq 4Ce^{\gamma/2}e^{-\gamma\abs{\frac{\theta}{2}}},\] where $C$ and $\gamma$ are as defined in Property \ref{prop exp decay}.
    \label{coeff decay}
\end{enumerate}
\end{lem}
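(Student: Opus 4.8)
The plan is to derive both statements directly from the two hypotheses on $\psi$ that are available: the $C^{n-1}$-smoothness of $\psi$ (property \ref{prop smoothness}) and the exponential decay of $\psi$ together with its derivatives up to order $n-1$ (property \ref{prop exp decay}).

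\textbf{Part \ref{coeff lower}.} First I would match the two polynomial representations at the common breakpoint $x=\theta/2$. Since $\psi \in C^{n-1}(\RR)$ by property \ref{prop smoothness}, all one-sided derivatives $\psi^{(\alpha)}$ for $0\leq \alpha \leq n-1$ agree at $x=\theta/2$ from the left (using \eqref{left wave form}) and from the right (using \eqref{right wave form}). Both representations are written as Taylor-type expansions centred at $\theta/2$: differentiating \eqref{left wave form} $\alpha$ times and evaluating at $x=\theta/2$ kills every term except the one with exponent $\alpha$, giving $\psi^{(\alpha)}(\theta/2^-) = \alpha!\,A^{\alpha}_{\theta-1}$; similarly $\psi^{(\alpha)}(\theta/2^+) = \alpha!\,A^{\alpha}_{\theta}$. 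Equating these for each $\alpha = 0,1,\dots,n-1$ yields $A^{j}_{\theta-1} = A^{j}_{\theta}$ for $j=0,\dots,n-1$. (For $\alpha=n$ there is no constraint, since $\psi$ need only be $C^{n-1}$, which is exactly why the leading coefficients may differ.)

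\textbf{Part \ref{coeff decay}.} Here I would isolate the leading coefficient $A^n_\theta$ by evaluating a high-order derivative, but since only derivatives up to order $n-1$ are controlled pointwise by \ref{prop exp decay}, I instead recover $A^n_\theta$ from $\psi^{(n-1)}$, which on $[\theta/2,(\theta+1)/2]$ equals the affine function $n!\,A^n_\theta\,(x-\theta/2) + (n-1)!\,A^{n-1}_\theta$. Evaluating at the two endpoints $x=\theta/2$ and $x=(\theta+1)/2$ and taking the difference gives
\[
\psi^{(n-1)}\!\left(\tfrac{\theta+1}{2}\right) - \psi^{(n-1)}\!\left(\tfrac{\theta}{2}\right) = \tfrac{n!}{2}\,A^n_\theta,
\]
so $|A^n_\theta| \leq \tfrac{2}{n!}\big(|\psi^{(n-1)}(\tfrac{\theta+1}{2})| + |\psi^{(n-1)}(\tfrac{\theta}{2})|\big)$. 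Now apply the bound $|\psi^{(n-1)}(x)| \leq C e^{-\gamma|x|}$ from \ref{prop exp decay} at $x = \theta/2$ and $x=(\theta+1)/2$. Since $|(\theta+1)/2| \geq |\theta/2| - 1/2$, we get $e^{-\gamma|(\theta+1)/2|} \leq e^{\gamma/2} e^{-\gamma|\theta/2|}$, and of course $e^{-\gamma|\theta/2|} \leq e^{\gamma/2}e^{-\gamma|\theta/2|}$; combining,
\[
|A^n_\theta| \leq \tfrac{2}{n!}\,C\,(1 + e^{\gamma/2})\,e^{-\gamma|\theta/2|} \leq \tfrac{4C}{n!}\,e^{\gamma/2}e^{-\gamma|\theta/2|} \leq 4Ce^{\gamma/2}e^{-\gamma|\theta/2|},
\]
using $n!\geq 1$ and $1+e^{\gamma/2}\leq 2e^{\gamma/2}$. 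This gives the claimed inequality.

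\textbf{Main obstacle.} The only subtlety is the bookkeeping in Part \ref{coeff decay}: one must be careful to express $A^n_\theta$ using only the derivative order $n-1$ for which decay is actually assumed (rather than the naive choice $\psi^{(n)}$, which is merely piecewise constant and not controlled by \ref{prop exp decay}), and then to absorb the shift of $1/2$ in the argument into the constant $e^{\gamma/2}$ cleanly. Everything else is a routine computation with Taylor coefficients, and no case distinction on the sign of $\theta$ is needed because the estimate is stated in terms of $|\theta/2|$.
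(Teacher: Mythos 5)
Your proof is correct and follows essentially the same route as the paper's: Part (i) matches one-sided Taylor coefficients at $x=\theta/2$ using $C^{n-1}$-regularity, and Part (ii) applies the exponential decay bound for $\psi^{(n-1)}$ at the two endpoints $\theta/2$ and $(\theta+1)/2$ of the right subinterval. The only cosmetic difference is that you eliminate the $A^{n-1}_\theta$ term by differencing endpoint values, whereas the paper first bounds $A^{n-1}_\theta$ from the value at $\theta/2$ and then uses the triangle inequality at $(\theta+1)/2$; both produce the same constant $4Ce^{\gamma/2}$.
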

\begin{proof}
By virtue of the fact that $\psi\in C^j$ for $j=0,1,\ldots,n-1$ (Property \ref{prop smoothness}), we have that
\[\lim_{x\rightarrow \theta/2^-}\psi^{(j)}{(x)}=\lim_{x\rightarrow \theta/2^+}\psi^{(j)}{(x)},\] which yields
\[A^j_{\theta-1}=A^j_{\theta}\]
for $0\leq j\leq n-1$, thus proving \ref{coeff lower}.


For proving \ref{coeff decay}, we use Property \ref{prop exp decay} with $\alpha=n-1$ to obtain
\begin{equation}\label{coeff decay est1}
\bigg|n!\, A^n_\theta\bigg(x-\frac{\theta}{2}\bigg)+(n-1)!\, A^n_{\theta-1}\bigg|\leq Ce^{-\gamma \abs{x}}
\end{equation}
for $x\in \big[\frac{\theta}{2},\frac{\theta+1}{2}\big]$. In particular, taking the limit as $x\rightarrow\theta/2$, we get
\begin{equation}
\label{coeff decay est2}
|A^n_{\theta-1}|\leq \frac{C}{(n-1)!}e^{-\gamma \abs{\frac{\theta}{2}}}.    
\end{equation}
Now, we substitute $x=\frac{\theta+1}{2}$ in \eqref{coeff decay est1} and use the triangle inequality, along with \eqref{coeff decay est2}, which yields
\[\bigg|\frac{A^n_\theta}{2}\bigg|\leq \frac{C}{n!}(e^{-\gamma\abs{\frac{\theta+1}{2}}}+e^{-\gamma\abs{\frac{\theta}{2}}})\leq 2Ce^{\gamma/2}e^{-\gamma\abs{\frac{\theta}{2}}},\]
which implies \ref{coeff decay}.
\end{proof}

\section{Boundedness of Test Functions}
\label{sec test func 1}
We now prepare the ground for the definition of test functions to be used to establish the desired lower bounds. The arguments used in this section are identical to those in [\cite{seeger2017haar}, Section 4. Nevertheless, we include them here for completeness. Throughout this section, we fix $m\in \NN$. 

We will use the local means characterization of Triebel-Lizorkin spaces, as described in Section \ref{sec fspq spaces}. To this effect, we consider smooth functions $\phi_0$ and $\phi$, both supported in $(-1/2,1/2)$ so that $|\widehat{\phi_0}(\xi)|>0$ for $|\xi|\leq 1$ and $|\widehat{\phi}(\xi)|>0$ for $1/4\leq |\xi|\leq 1$. We also assume that the cancellation condition \eqref{eq local means moment cancel} holds for $\phi$, for $M_1\in \NN$ with $M_1+1>s$. We set $\phi_k=2^k\phi(2^k\cdot)$ for $k\in \NN$. We shall use the characterization of $\fspq$ using the $\phi_k$, as defined in \eqref{local means char}. 

Let $\eta\in C^\infty(\RR)$ be supported in $(-1/2,1/2)$ such that $\int x^M\eta(x)\,dx=0$ for $M=0,1,\ldots, n+2$. Let $\mathfrak{L}^m$ be a finite set of non-negative integers $\geq m$, such that $\#\mathfrak{L}^m \geq 2^m$. For each $l\in \mathfrak{L}^m$, let $\mathcal{P}^m_l$ denote a set of $K_02^{m-l}$ separated points in $[0,K_0]$, where $K_0\in \NN$ is a fixed positive integer to be decided later. More precisely, we have $\mathcal{P}^m_l=\{x_{l,1},\ldots, x_{l,N(l)}\}$ with $N(l)\leq 2^{l-m}$ and $x_{l,\nu}<x_{l,\nu+1}$ with $x_{l,\nu+1}-x_{l,\nu}\geq K_02^{m-l}$. For each $l\in \mathfrak{L}^m$, let 
\begin{equation}
\mathfrak{S}^m_l=\{\nu: x_{l,\nu}\in \mathcal{P}^m_l\}.    
\end{equation}
Next, we define
\begin{equation}
    \eta_{l,\nu}=\eta(2^l(x-x_{l,\nu})).
\end{equation}
For a sequence $\{a_{l,\nu}\}$ with $\sup_{l,\nu}\abs{a_{l,\nu}}\leq 1$, we define
\begin{equation}
g_m(x)=\sum_{l\in \mathfrak{L}^m}2^{-ls}\sum_{\nu\in \mathfrak{S}^m_l}a_{l,\nu}\eta_{l,\nu}(x).    
\end{equation}
If the families $\mathfrak{L}^m$, $(m\in \NN)$ are disjoint, we define
\begin{equation}
    g= \sum_{m\in \NN}\beta_m g_m
\end{equation}
for $\beta_m\in \RR$.

The following proposition is identical to Proposition 4.1 in [\cite{seeger2017haar} (which was in turn a modification of the corresponding result in [\cite{christ2006necessary}).

\begin{prop}
\label{prop tech}
\leavevmode
Let $s>-n-2$. 
\begin{enumerate}[label={\upshape(\roman*)}]
    \item \label{prop tech 1}
    For $1\leq p,q<\infty$, we have
    \[\|g_m\|_{\fspq}\lesssim_{p,q,s}\bigg\|\bigg(\sum_{l\in \mathfrak{L}^m}\big|\sum_{\nu\in \mathfrak{S}^m_l}a_{l,\nu}\mathds{1}_{l,\nu}\big|^q\bigg)^{1/q}\bigg\|_p\]
    and
    \[\|g\|_{\fspq}\lesssim_{p,q,s}\bigg\|\bigg(\sum_{m\in \NN}|\beta_m|^q\sum_{l\in \mathfrak{L}^m}\big|\sum_{\nu\in \mathfrak{S}^m_l}a_{l,\nu}\mathds{1}_{l,\nu}\big|^q\bigg)^{1/q}\bigg\|_p.\]
    Here, $\mathds{1}_{l,\nu}$ denotes the characteristic function of the interval centred at $x_{l,\nu}$ of radius $2^{-l}$.
    \item \label{prop tech 2}
    For $1\leq q\leq p,\infty$, we have
    \begin{equation}
    \label{eqn tech 1}
     \|g_m\|_{\fspq}\lesssim_{p,q,s}(2^{-m}\#(\mathfrak{L}^m))^{1/q}   
    \end{equation}
    and
    \begin{equation}
    \label{eqn tech 2}
     \|g\|_{\fspq}\lesssim_{p,q,s}(\sum_{m\geq 1}|\beta_m|^q2^{-m}\#(\mathfrak{L}^m))^{1/q}.   
    \end{equation}
\end{enumerate}
\end{prop}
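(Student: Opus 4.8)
The plan is to pass to the local means characterization \eqref{local means char}, so that $\|g_m\|_{\fspq}\approx\|(\sum_k 2^{ksq}|\phi_k*g_m|^q)^{1/q}\|_p$, with $\phi$ smooth, supported in $(-1/2,1/2)$, and having moments vanishing up to order $M_1$, $M_1+1>s$. Since $\phi_k*g_m=\sum_{l\in\mathfrak{L}^m}2^{-ls}\sum_{\nu}a_{l,\nu}\,\phi_k*\eta_{l,\nu}$, everything reduces to almost-orthogonality estimates for the pieces $\phi_k*\eta_{l,\nu}$. The first step is the standard Taylor-expansion-against-cancellation estimate, in two one-sided forms. For $l\le k$, expanding $\eta_{l,\nu}$ and using the vanishing moments of $\phi$,
\[|\phi_k*\eta_{l,\nu}(x)|\lesssim 2^{-(k-l)(M_1+1)}\mathds{1}_{J_{l,\nu}}(x),\]
where $J_{l,\nu}$ is the interval of radius $2^{-l}$ about $x_{l,\nu}$ (it contains the support of the convolution). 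For $l>k$, expanding $\phi_k$ and using the $n+2$ vanishing moments of $\eta$,
\[|\phi_k*\eta_{l,\nu}(x)|\lesssim 2^{(k-l)(n+4)}\mathds{1}_{J^k_{l,\nu}}(x),\]
where $J^k_{l,\nu}$ has radius $2^{-k}$. Multiplying by the weight $2^{(k-l)s}$ coming from $2^{-ls}$ and $2^{ks}$, the conditions $M_1+1>s$ and $s>-n-2$ make both exponents give geometric decay in $|k-l|$.

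Next I would turn these into a single pointwise bound. At the same scale the supports $J_{l,\nu}$ are pairwise disjoint (for $K_0\ge2$), while at coarse scale $J^k_{l,\nu}$ may overlap; comparing averages gives $\mathds{1}_{J^k_{l,\nu}}\lesssim 2^{l-k}M\mathds{1}_{l,\nu}$, where $M$ is the Hardy--Littlewood maximal operator. Using the $K_0 2^{m-l}$-separation of the $x_{l,\nu}$ one then obtains
\[2^{ks}|\phi_k*g_m(x)|\lesssim \sum_{l\in\mathfrak{L}^m}2^{-\delta|k-l|}\,M G_l(x),\qquad G_l:=\sum_{\nu\in\mathfrak{S}^m_l}|a_{l,\nu}|\mathds{1}_{l,\nu},\]
for some $\delta=\delta(p,q,s,n)>0$.

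For part \ref{prop tech 1}, taking $\ell^q$ in $k$ and applying Young's inequality on $\ell^q$ collapses the convolution in $|k-l|$, leaving $(\sum_{l}(MG_l)^q)^{1/q}$ pointwise; the Fefferman--Stein vector-valued maximal inequality (valid since $1<p,q<\infty$) then strips off the maximal functions, and disjointness of the $\mathds{1}_{l,\nu}$ within a scale gives $G_l=|\sum_\nu a_{l,\nu}\mathds{1}_{l,\nu}|$, which is the claimed estimate. The bound for $g=\sum_m\beta_m g_m$ follows from the same computation, carrying the extra sum over the disjoint families $\mathfrak{L}^m$ through each step.

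For part \ref{prop tech 2}, where $1\le q\le p$, one uses $|a_{l,\nu}|\le1$ to bound $G_l\le\mathds{1}_{E_l}$ with $E_l=\bigcup_\nu\operatorname{supp}\mathds{1}_{l,\nu}$; the separation of $\mathcal{P}^m_l$ and the cardinality bound $N(l)\le2^{l-m}$ give $|E_l|\lesssim2^{-m}$, and the compact support of $\phi,\phi_0,\eta$ confines everything to a fixed bounded interval. Starting again from $\|g_m\|_{\fspq}\lesssim\|(\sum_{l\in\mathfrak{L}^m}(MG_l)^q)^{1/q}\|_p$, the matter reduces to showing $\|(\sum_{l}\mathds{1}_{E_l})^{1/q}\|_p\lesssim(2^{-m}\#\mathfrak{L}^m)^{1/q}$; this is obtained from $\sum_l\mathds{1}_{E_l}\le\#\mathfrak{L}^m$, the $L^1$ bound $\int\sum_l\mathds{1}_{E_l}\lesssim2^{-m}\#\mathfrak{L}^m$, the normalization $2^{-m}\#\mathfrak{L}^m\ge1$, and a distributional estimate exploiting the structure of the $E_l$ as neighborhoods of well-separated point sets. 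I expect this last estimate — rather than the almost-orthogonality, which is routine — to be the main technical point; it is carried out exactly as in \cite{christ2006necessary} and \cite{seeger2017haar}.
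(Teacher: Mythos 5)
Your treatment of part \ref{prop tech 1} takes a genuinely different (and more self-contained) route than the paper. The paper simply invokes the Frazier--Jawerth smooth atomic decomposition, whereas you unroll the proof: Taylor-expansion-against-moments gives the two one-sided bounds $2^{-(k-l)(M_1+1)}$ for $l\le k$ (using vanishing moments of $\phi$) and $2^{(k-l)(n+4)}$ for $l>k$ (using the $n+3$ vanishing moments of $\eta$); after absorbing the weight $2^{(k-l)s}$ and the overlap factor $2^{l-k}$ coming from $\mathds{1}_{J^k_{l,\nu}}\lesssim 2^{l-k}M\mathds{1}_{l,\nu}$, one gets $2^{ks}|\phi_k*g_m|\lesssim\sum_l 2^{-\delta|k-l|}MG_l$ with $\delta=\min(M_1+1-s,\,s+n+3)>0$, and then Young in $k$ plus Fefferman--Stein finish. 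This is correct, and arguably more informative than the paper's citation; note however that Fefferman--Stein requires $1<p,q<\infty$, so your route misses the endpoints $p=1$, $q=1$ in the stated range $1\le p,q<\infty$, which the atomic-decomposition approach covers. This is immaterial for the rest of the paper, which only applies the proposition with $1<p,q<\infty$.

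For part \ref{prop tech 2} you correctly reduce to showing $\|(\sum_{l\in\mathfrak{L}^m}|\sum_\nu\mathds{1}_{l,\nu}|^q)^{1/q}\|_p\lesssim(2^{-m}\#\mathfrak{L}^m)^{1/q}$, and you correctly sense that the $L^1$ bound $\|\sum_l\mathds{1}_{E_l}\|_1\lesssim 2^{-m}\#\mathfrak{L}^m$ and $L^\infty$ bound $\le\#\mathfrak{L}^m$ are not enough: naive Riesz--Thorin interpolation only gives $\|\sum_l\mathds{1}_{E_l}\|_{p/q}\lesssim 2^{-mq/p}\#\mathfrak{L}^m$, which exceeds the target $2^{-m}\#\mathfrak{L}^m$ when $p>q$. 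The step you leave as ``a distributional estimate, carried out as in \cite{christ2006necessary},\cite{seeger2017haar}'' is precisely where the real work lives: the paper shows the \emph{dyadic BMO} bound $\|(\sum_l|G_l|^q)^{1/q}\|_{BMO_{\mathrm{dyad}}}\lesssim(2^{-m}\#\mathfrak{L}^m)^{1/q}$ (by subtracting, on each dyadic interval $J$, the constants $c_{J,l}=G_l(x_J)$ at the fine scales $2^{-l}\ge|J|$, then using the $K_02^{m-l}$-separation at coarse scales to control $\int_J|G_l|^q$) and interpolates between $L^q$ and $BMO_{\mathrm{dyad}}$ to reach $L^p$ for $p\ge q$. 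So your outline is on track and correctly cites the source, but the key estimate is asserted rather than proved; to make the argument complete you should carry out the $BMO_{\mathrm{dyad}}$ computation (or at least name the Fefferman--Stein $L^q$--$BMO$ interpolation as the mechanism that upgrades the $L^1/L^\infty$ information).
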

\begin{proof}
\ref{prop tech 1} is a consequence of the fact that $\{\eta_{l,\nu}\}_{l,\nu}$ form a family of smooth atoms in the sense of Frazier and Jawerth ([\cite{frazier1990discrete}, Theorem 4.1 and Section 12). We use the pairwise disjointednes of the sets $\mathfrak{L}^m$ here. 

Consequently, in order to prove \eqref{eqn tech 1}, it suffices to show that
\begin{equation}
\label{eq count bound}
    \bigg\|\bigg(\sum_{l\in \mathfrak{L}^m}\big|\sum_{\nu\in \mathfrak{S}^m_l}\mathds{1}_{l,\nu}\big|^q\bigg)^{1/q}\bigg\|_p\lesssim_{p,q}(2^{-m}\#(\mathfrak{L}^m))^{1/q}
\end{equation}
(recall that $\sup_{l,\nu}|a_{l,\nu}|\leq 1$).
Let $G_l(x)=\sum_{\nu\in \mathfrak{S}^m_l}\mathds{1}_{l,\nu}(x)$ and $G(x)=(\sum_{l\in \mathfrak{L}^m}|G_l(x)|^q)^{1/q}$. To prove the desired inequality for the $L^p$ norm of $G$, we use the dyadic version of the Fefferman-Stein interpolation theorem for $L^q$ and $BMO$ (see [\cite{stein1993harmonic}, Chapter 4). Here we use the fact that $p\geq q$. Thus, it is enough to show that both the $L^q$ and the $BMO_{\text{dyad}}$ norms of $G$ are bounded by $(2^{-m}\#(\mathfrak{L}^m))^{1/q}$. This follows almost immediately for the former. For the $BMO_{\text{dyad}}$ norm, we need to show that
\begin{equation}
\label{eq BMO}
    \sup_J\inf_{c\in \RR}\frac{1}{|J|}\int |G(y)-c|\,dy\lesssim 2^{-m}\#(\mathfrak{L}^m))^{1/q},
\end{equation}
where the supremum is taken over all dyadic intervals $J$. We fix $J$ and denote its midpoint by $x_J$. We define 
\[c_{J,l}=\begin{cases}
\sum_{\nu\in \mathfrak{S}^m_l}\mathds{1}_{I_{l,\nu}}(x_J) &\textrm{ if } |J|\leq 2^{-l}\\
0 &\text{ otherwise,}
\end{cases}\]
and 
\[c_J=(\sum_{l\in \mathfrak{L}^m}c_{J,l}^q)^{1/q}.\]
Then 
\begin{align*}
\frac{1}{|J|}\int |G(y)-c_J|\,dy&= \frac{1}{|J|}\int \bigg|\sum_{l\in \mathfrak{L}^m}|G_l(y)|)^{1/q}-(\sum_{l\in \mathfrak{L}^m}c_{J,l}^q)^{1/q}\bigg|,dy \\
&\leq \frac{1}{|J|}\int \bigg(\sum_{l\in \mathfrak{L}^m}|G_l(y)-c_{J,l}|^q\bigg)^{1/q}\,dy\\
&\leq \bigg(\sum_{l\in \mathfrak{L}^m}\frac{1}{|J|}\int|G_l(y)-c_{J,l}|^q\bigg)^{1/q}\,dy,
\end{align*}
where we have used the triangle inequality in $l^q$ and H\"older's inequality on the interval $J$. Now for $|J|\leq 2^{-l}$ and $y\in J$, we have that $G_l(y)=c_{J,l}$. Also, as $c_{J,l}=0$ for $|J|>2^{-l}$, we get
\begin{equation*}
\frac{1}{|J|}\int |G(y)-c_J|\,dy\leq \bigg(\sum_{l\in \mathfrak{L}^m, 2^{-l}<|J|}\frac{1}{|J|}\int|G_l(y)|^q\,dy\bigg)^{1/q}.     
\end{equation*}
But as the points in $\mathfrak{S}^m_l$ are $K_02^{m-l}$ separated, by the definition of $G_l(y)$, we have
\[\int|G_l(y)|^q\,dy\leq 
\begin{cases}
2^{-l}& \text{ if } 2^{-l}< |J|\leq 2^{m-l}\\
2^{-m}|J|& \text{ if } 2^{m-l}< |J|.
\end{cases}
\]
Hence
\begin{align*}
\sum_{l\in \mathfrak{L}^m, 2^{-l}<|J|}\frac{1}{|J|}\int|G_l(y)|^q\,dy\leq &\sum_{l\:2^{-l}< |J|\leq 2^{m-l}} (2^l|J|)^{-1}+ \sum_{l\in \mathcal{L}^m} 2^{-m} \\
\lesssim (1+2^{-m}\#(\mathfrak{L}^m)).
\end{align*}
This proves \eqref{eq BMO}, as $(\mathfrak{L}^m)\geq 2^m$.

Finally, \eqref{eqn tech 2} can be proven by using the second assertion in \ref{prop tech 1}, \eqref{eq count bound} and the triangle inequality in $L^{p/q}$, noting that $p\geq q$.     
\end{proof}

\section{Definition of the Test Functions for the Non-Endpoint Case}
\label{sec test func}
In this section, we define the test functions to be used to establish the lower bounds in the non-endpoint case. Our example is essentially the same as the one used in [\cite{seeger2017haar}, except we take care to increase the separation between the translates at each dyadic scale (by a factor of $K_0$), to allow the exponential decay of the spline wavelet to kick in. Consequently, our function lives on $[0,K_0]$, rather than the unit interval. We now present the details.

Let $\eta$ be a $C^\infty$ function supported in $(-2^{-5}, 2^{-5})$. We require $\eta$ to be odd for even $n$, and even for odd $n$, so that $x^n\eta(x)$ is always odd. Furthermore, let
\begin{equation}
\label{moment cancel test func}
\int x^M\eta(x)\, dx=0,    
\end{equation} for $M=0,1,\ldots, n+2$ and let
\begin{equation}
\label{testbig}
\int_0^{\frac{1}{2}}x^n\eta(x)\, dx\geq 1.    
\end{equation}

Let $A$ be an arbitrary set of wavelet frequencies and $N$ so that 
\begin{equation}
\label{set freq}
\Lambda< \#A+1 \textrm{ and } 2^N\leq \#A <2^{N+1}.
\end{equation}
$N$ and $\eta$ will remain fixed henceforth. For $k=0,1,2,\ldots$ and $\mu\in \ZZ$, we define
\begin{equation}
\eta_{k,K_0\mu}(y)=\eta(2^{k+N}(y-2^{-k}K_0\mu-2^{-k-1})).
\end{equation}
Let $r_k$ denote the $k$-th Rademacher function on $[0,1]$. For $t\in [0,1]$ and $2^k\in A$ let
\begin{equation}
\label{k-levelfunc}
\Upsilon_k(y)=2^{N(-s+1/q)}\sum_{\mu=0}^{2^k-1}\eta_{k,K_0\mu}(y)
\end{equation}
and 
\begin{equation}
\label{test function}
f_t(y)=2^{-N/q}\sum_{2^k\in A} r_k(t)2^{-ks}\Upsilon_k(y).    
\end{equation}

\begin{lem}
\label{test norm}
$\norm{f_t}_{F^s_{p,q}} \lesssim_{p,q,s} 1 $
uniformly in $t\in [0,1]$.
\end{lem}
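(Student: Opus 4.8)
The plan is to recognise the test function $f_t$ as a single instance of the function $g_m$ from \cref{prop tech}, with $m=N$, and then quote part~\ref{prop tech 2} of that proposition. Two conditions must be in force for this: that $1\le q\le p$, and that $s>-n-2$; both hold throughout this section (the latter because here $s>-\tfrac1{p'}-n\ge -1-n$).

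First I would absorb the normalising powers of two appearing in \eqref{k-levelfunc} and \eqref{test function}: since $2^{-N/q}\cdot 2^{N(-s+1/q)}=2^{-Ns}$, the definition of $f_t$ unwinds to
\[
f_t(y)=2^{-Ns}\sum_{2^k\in A}r_k(t)\,2^{-ks}\sum_{\mu=0}^{2^k-1}\eta_{k,K_0\mu}(y).
\]
Now set $m=N$ and $\mathfrak L^{N}=\{k+N:\,2^k\in A\}$; this is a set of integers $\ge N$ with $\#\mathfrak L^{N}=\#A\ge 2^{N}=2^{m}$ by \eqref{set freq}. For $l=k+N\in\mathfrak L^{N}$, take the points $x_{l,\mu}:=2^{-k}K_0\mu+2^{-k-1}$, $\mu=0,\dots,2^{k}-1$: these are $N(l)=2^{k}=2^{l-m}$ points, they lie in $(0,K_0)$ (using $K_0\in\NN$), and consecutive ones differ by exactly $2^{-k}K_0=K_02^{m-l}$, so $\mathcal P^{N}_l:=\{x_{l,\mu}\}$ is an admissible $K_02^{m-l}$-separated configuration. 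Since $\eta$ is supported in $(-2^{-5},2^{-5})\subset(-\tfrac12,\tfrac12)$, satisfies \eqref{moment cancel test func} for $M=0,\dots,n+2$, and $\eta_{k,K_0\mu}(y)=\eta(2^{k+N}(y-x_{l,\mu}))=\eta(2^{l}(y-x_{l,\mu}))$, all the hypotheses of \cref{prop tech} are met (the oddness of $x^n\eta$ and the normalisation \eqref{testbig}, imposed for the lower bounds of later sections, play no role here). Choosing the coefficients $a_{l,\mu}:=r_k(t)$, which satisfy $|a_{l,\mu}|=1$, the resulting $g_N$ is literally $f_t$.

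It then remains only to invoke part~\ref{prop tech 2} of \cref{prop tech}:
\[
\norm{f_t}_{\fspq}=\norm{g_N}_{\fspq}\lesssim_{p,q,s}\bigl(2^{-N}\#\mathfrak L^{N}\bigr)^{1/q}=\bigl(2^{-N}\#A\bigr)^{1/q}\le 2^{1/q}\lesssim 1,
\]
the last step using $\#A<2^{N+1}$ from \eqref{set freq}. Since this bound is insensitive to the signs $r_k(t)\in\{-1,1\}$, it is uniform in $t\in[0,1]$. The only genuine work is the bookkeeping that matches $f_t$ to the template $g_m$ — checking the separation and the containment in $[0,K_0]$ of the translation points, and the count $N(l)\le 2^{l-m}$ — after which \cref{prop tech} applies verbatim; no further analytic input is required.
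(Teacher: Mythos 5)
Your proof is correct and follows essentially the same route as the paper: both recast $f_t$ as $g_m$ with $m=N$, $\mathfrak L^N=\{k+N:2^k\in A\}$, verify the $K_02^{m-l}$-separation of the translation points and the bound $\#\mathfrak L^N=\#A\approx 2^N$, and then invoke \cref{prop tech} to conclude $\|f_t\|_{\fspq}\lesssim (2^{-N}\#\mathfrak L^N)^{1/q}\lesssim 1$. The only cosmetic difference is that you cite part~\ref{prop tech 2} directly whereas the paper's text points to part~\ref{prop tech 1}, but the bound actually used there is precisely the conclusion of part~\ref{prop tech 2}, so there is no substantive divergence.
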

\begin{proof}
We write $f_t$ in the expanded form 
\[f_t=\sum_{k:2^k\in A}2^{-(k+N)s}\sum_{\mu=0}^{2^k-1}r_{k}(t)\eta(2^{k+N}(y-2^{-k}K_0\mu-2^{-k-1})).\]

We now set $m=N$, $\mathfrak{L}^N=\{k+N:2^k\in A\}$ and 
apply Proposition \ref{prop tech}, \ref{prop tech 1}. The lemma now follows as $2^{-N}\#(\mathfrak{L}^N)\lesssim 1$ and the points $\{2^{-k}K_0\mu+2^{-k-1}: 0\leq \mu\leq 2^{k}-1\}$ are $K_02^{m-l}$ separated, for $l=k+N$.
\end{proof}
\section{A Few Preliminary Estimates}
\label{sec prelim estimates} In this section we require $\phi$ (as defined in Section \ref{sec test func 1}) to be supported on $(-2^{-4}, 2^{-4})$ such that
\begin{equation}
\label{triebel lizorkin moment cancel}
\int x^M\phi(x)\,dx=0    
\end{equation} for $M=0,1,\ldots,n+1$, and $\|\phi\|_{L^1}\leq 1.$ Let $\phi_k=2^k\phi(2^k\cdot)$. We define $\Phi_1(x)=\int_{-\infty}^{x}\phi(t)\,dt$ and for $j=2,\ldots, n+1$, let \[\Phi_j(x)=\int_{-\infty}^{x}\Phi_{j-1}(t)\,dt\] be the $jth$ order primitive of $\phi$, also supported in $(-2^{-4}, 2^{-4})$. Further, let 
\begin{equation}
\label{left wave form 1/2}
\psi(x)=A^n_{0}\left(x-\frac{1}{2}\right)^n+A^{n-1}\left(x-\frac{1}{2}\right)^{n-1}+\ldots+A^0.
\end{equation}
on $[0, \frac{1}{2}]$, and
\begin{equation}
\label{right wave form 1/2}
\psi(x)=A^n_{1}\left(x-\frac{1}{2}\right)^n+A^{n-1}\left(x-\frac{1}{2}\right)^{n-1}+\ldots+A^0
\end{equation}
on $[\frac{1}{2},1]$, where the equality of the non-leading co-efficients follows from Lemma \ref{lemma wavelet prop}, \ref{coeff lower}. By considering a suitable translation of $\psi_n$ if necessary, we can assume that 
\begin{equation}
\label{leading terms not equal}    
A^n_0\neq A^n_1
\end{equation}
and in particular, that $A^n_0\neq 0$.

\begin{lem}
\label{lemma pou lower bound}
There exists $c_0\in (0,1)$ and a subinterval $J\subset [1/4,3/4]$ so that 
\[\abs{\phi*\psi(x)}\geq c_0\]
for $x\in J$.
\end{lem}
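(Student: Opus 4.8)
The plan is to exploit the moment cancellation of $\phi$ together with the piecewise-polynomial structure of $\psi$. First I would observe that on the interval $[0,1]$ the wavelet $\psi$ agrees with a polynomial on $[0,1/2]$ and with a (different) polynomial on $[1/2,1]$, and by Lemma \ref{lemma wavelet prop}\ref{coeff lower} these two polynomials differ only in their leading coefficient. Thus we may write, for $x\in[0,1]$,
\[
\psi(x)=Q(x)+(A^n_1-A^n_0)\left(x-\tfrac12\right)^n_+,
\]
where $Q$ is a genuine polynomial of degree $n$ (the "left" polynomial) and $(\cdot)_+^n$ denotes the truncated power. Since $\phi$ is supported in $(-2^{-4},2^{-4})$ and has vanishing moments up to order $n+1 > n = \deg Q$, the convolution $\phi * Q$ vanishes identically on the region where $x-2^{-4}$ and $x+2^{-4}$ both stay inside $[0,1]$, i.e.\ for $x\in[2^{-4},1-2^{-4}]$, which certainly contains $[1/4,3/4]$. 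Hence on $[1/4,3/4]$,
\[
\phi*\psi(x)=(A^n_1-A^n_0)\,\bigl(\phi * (\cdot)_+^n\bigr)(x).
\]

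Next I would compute $\phi*(\cdot-1/2)_+^n$ near $x=1/2$. Writing $R_n(x)=\int \phi(t)\,(x-t-1/2)_+^n\,dt$, one checks that $R_n$ is, up to a constant factor, the $n$-th order primitive $\Phi_{n+1}$ of $\phi$ reflected and translated: more precisely, integrating by parts $n$ times (using that $\phi$ has enough vanishing moments so that $\Phi_1,\dots,\Phi_{n+1}$ are all compactly supported in $(-2^{-4},2^{-4})$), we get $R_n(x) = n!\,\Phi_{n+1}(x-1/2)$ for $x$ near $1/2$ — or something proportional to it; the exact constant is a routine bookkeeping matter. The key point is that $\Phi_{n+1}$ is not identically zero on its support: if it were, then $\phi$ itself would vanish, contradicting $|\widehat\phi|>0$ on $\{1/4\le|\xi|\le1\}$. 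Therefore there is a point $x_0$ and a small subinterval $J\subset[1/4,3/4]$ around it on which $|\Phi_{n+1}(x-1/2)|$ is bounded below by a positive constant; by continuity of $\Phi_{n+1}$ we can take $J$ to be genuinely an interval.

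Putting these together, for $x\in J$ we obtain
\[
|\phi*\psi(x)| = |A^n_1-A^n_0|\cdot n!\cdot |\Phi_{n+1}(x-\tfrac12)| \ \ge\ c_0
\]
for a suitable $c_0\in(0,1)$, where we crucially use the normalization \eqref{leading terms not equal} that $A^n_0\ne A^n_1$ so that the prefactor is a fixed positive number depending only on $\psi$; shrinking $c_0$ if necessary ensures $c_0<1$. For the case $n=0$ the argument degenerates pleasantly: $\psi$ is piecewise constant, $\phi*\psi$ is a constant multiple of $\Phi_1(x-1/2)=\int_{-\infty}^{x-1/2}\phi$, and the same reasoning applies with $Q$ constant.

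The main obstacle I anticipate is the careful identification of $\phi * (\cdot)^n_+$ with a primitive of $\phi$ and verifying it is nonzero on a full subinterval rather than merely at isolated points — this requires the vanishing-moment hypotheses on $\phi$ to guarantee that the boundary terms in the repeated integration by parts all vanish (so that $\Phi_j$ stays supported in $(-2^{-4},2^{-4})$), and it requires invoking $|\widehat\phi|>0$ somewhere to rule out $\Phi_{n+1}\equiv0$. A secondary technical point is making sure the "edge effects" near $x=2^{-4}$ and $x=1-2^{-4}$ (where $\phi*Q$ might fail to vanish because the support of $\phi(x-\cdot)$ pokes outside $[0,1]$) stay away from the interval $J$; since we are free to locate $J$ near $1/2$, and $[1/4,3/4]\subset[2^{-4},1-2^{-4}]$, this causes no trouble.
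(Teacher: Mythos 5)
Your proof is correct and takes essentially the same route as the paper. Both arguments use the vanishing moments of $\phi$ to annihilate the common degree-$\le n$ part of $\psi$ near $1/2$, reduce $\phi*\psi(x)$ on $[1/4,3/4]$ to $(A_1^n-A_0^n)\,n!\,\Phi_{n+1}\!\left(x-\tfrac12\right)$ by repeated integration by parts, and then invoke $A_0^n\neq A_1^n$; the only differences are presentational (you package the jump in the leading coefficient as a truncated power $(\cdot-\tfrac12)_+^n$ before convolving, rather than splitting the convolution integral at $x-y=1/2$) and that you make explicit the step, left implicit in the paper, that $\Phi_{n+1}\not\equiv 0$ follows from $\widehat{\phi}$ being nonvanishing on an annulus.
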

\begin{proof}
We observe that the support of $\phi$ is contained in $[x-1,x]$, whenever $x\in [1/4,3/4]$. For such $x$, we have
\[
\phi*\psi(x)=\int_{x-1/2}^{x} \phi(y)\psi(x-y)\,dy+ \int_{x-1}^{x-1/2} \phi(y)\psi(x-y)\,dy.
\]
We observe that $x-y$ lies in $\big[0,\frac{1}{2}\big]$ in the first integral and 
in $\big[\frac{1}{2},1\big]$ in the second one. Hence we can use (\ref{left wave form 1/2}) and (\ref{right wave form 1/2}) in the left and right integral, respectively. Now, for $j=0,\ldots,n-1$, we have 
\[A^j\int_{x-1/2}^{x}\bigg(x-y-\frac{1}{2}\bigg)^j \phi(y)\,dy+A^j\int_{x-1}^{x-1/2} \bigg(x-y-\frac{1}{2}\bigg)^j\phi(y)\,dy=A^j\int\bigg(x-y-\frac{1}{2}\bigg)^j \phi(y)\,dy.\]
The last expression is easily seen to be $0$ by \eqref{triebel lizorkin moment cancel}. Thus, all the lower degree terms cancel, and we have
\[\phi*\psi(x)=A^n_0\int_{x-1/2}^{x}\bigg(x-y-\frac{1}{2}\bigg)^n \phi(y)\,dy+A^n_1\int_{x-1}^{x-1/2} \bigg(x-y-\frac{1}{2}\bigg)^n\phi(y)\,dy.\]
Now performing an integration by parts $n$ times, along with the observation that the boundary terms are all zero, gives
\[\phi*\psi(x)=(-1)^n n!\,\left [ A_0^n\int_{x-1/2}^{x} \Phi_n(y)\,dy +A_1^n\int_{x-1}^{x-1/2} \Phi_n(y)\,dy\right ].\]
We thus conclude that 
\[\phi*\psi(x)=(-1)^n n!\,\bigg[A_0^n\Phi_{n+1}(x)-A_1^n\Phi_{n+1}(x-1)+(A_1^n-A_0^n)\Phi_{n+1}\big(x-\frac{1}{2}\big)\bigg].\]
In particular, for $x\in [1/4,3/4]$, we have that $\phi*\psi(x)=(-1)^{n+1} n!\,(A_1^n-A_0^n)\Phi\left(x-\frac{1}{2}\right)$. Using \eqref{leading terms not equal}, we conclude that there exists $c_0\in (0,1)$ (depending on $\psi_n$ and $\phi$) and a subinterval $J\subset [1/4,3/4]$ so that 
\[\abs{\phi*\psi(x)}\geq c_0\]
for $x\in J$.
\end{proof}

We again use $K_0$ to denote a fixed positive integer (to be decided later), which shall depend only on the wavelet $\psi_n$. For $k\in \NN\cup\{0\}$ and $\mu\in\ZZ$, let $J_{k,K_0\mu}=2^{-k}K_0\mu+2^{-k}J$ (where $J$ is as in Lemma \ref{lemma pou lower bound}). We then have
\begin{equation}
    \label{convol big}
    \abs{\phi_k*\psi_{k,K_0\mu}(x)}\geq c_0
\end{equation}
for $x\in J_{k,K_0\mu}$. We note that $J_{k,K_0\mu}$ is an interval of length $\gtrsim 2^{-k}$.

\begin{prop}
\label{inner product lower bound}
Let $\Upsilon_k$ be as defined in \eqref{k-levelfunc}. Then for $K_0$ large enough, we have 
\[\frac{|\Tilde{A}|}{2} 2^{N(-s+1/q-n-1)}\left|\int_0^{1/2}\eta(y)y\,dy\right|\leq \abs{2^k\left\langle\Upsilon_k,\psi_{k,K_0\mu}\right\rangle}\leq 2|\Tilde{A}| 2^{N(-s+1/q-n-1)}\left|\int_0^{1/2}\eta(y)y\,dy\right|.\]
Here $\Tilde{A}=A^n_1-A^n_0$ and depends only on the wavelet $\psi_n$.
\end{prop}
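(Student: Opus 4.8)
The plan is to split the sum defining $\langle\Upsilon_k,\psi_{k,K_0\mu}\rangle$ into the diagonal term $\mu'=\mu$, which will reproduce the asserted main term exactly, and the off‑diagonal terms $\mu'\neq\mu$, which I will show sum to a geometric tail controlled by $K_0$. First I would record, for each $\mu'$, the change of variables $u=2^{k+N}(y-2^{-k}K_0\mu'-2^{-k-1})$ in the integral $\langle\eta_{k,K_0\mu'},\psi_{k,K_0\mu}\rangle$. Since $\psi_{k,K_0\mu}(y)=\psi(2^ky-K_0\mu)$, a short computation gives
\[
\langle\eta_{k,K_0\mu'},\psi_{k,K_0\mu}\rangle
=2^{-k-N}\int\eta(u)\,\psi\!\left(2^{-N}u+K_0(\mu'-\mu)+\tfrac12\right)\,du .
\]
Because $\mathrm{supp}\,\eta\subset(-2^{-5},2^{-5})$ and $2^{-N}<1$, the argument of $\psi$ stays within the two polynomial pieces of $\psi$ adjacent to the half‑integer $\tfrac{j}{2}:=K_0(\mu'-\mu)+\tfrac12$ (with $j=2K_0(\mu'-\mu)+1$ odd), so I may insert the two polynomial representations of $\psi$ from Lemma~\ref{lemma wavelet prop}, the one on $[\tfrac{j-1}{2},\tfrac{j}{2}]$ for $u<0$ and the one on $[\tfrac{j}{2},\tfrac{j+1}{2}]$ for $u>0$.

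Next I would expand these polynomials in powers of $(2^{-N}u)$ and integrate against $\eta$. The moment conditions \eqref{moment cancel test func} on $\eta$ (valid up to order $n+2$) together with the equality of the non‑leading coefficients of the two pieces (Lemma~\ref{lemma wavelet prop}\ref{coeff lower}) annihilate every monomial of degree $<n$, and the requirement that $u^n\eta(u)$ be odd lets me fuse the $u<0$ and $u>0$ halves of the degree‑$n$ term, leaving
\[
\langle\eta_{k,K_0\mu'},\psi_{k,K_0\mu}\rangle
=2^{-k-N(n+1)}\bigl(A^n_{j}-A^n_{j-1}\bigr)\int_0^{1/2}\eta(u)u^n\,du ,
\]
where $A^n_{j}$ and $A^n_{j-1}$ are the leading coefficients of $\psi$ on $[\tfrac{j}{2},\tfrac{j+1}{2}]$ and $[\tfrac{j-1}{2},\tfrac{j}{2}]$. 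For $\mu'=\mu$ this is the case $j=1$, so $A^n_{j}-A^n_{j-1}=A^n_1-A^n_0=\Tilde{A}$; multiplying by the prefactor $2^{N(-s+1/q)}$ of $\Upsilon_k$ and by $2^k$ yields precisely $2^{N(-s+1/q-n-1)}\Tilde{A}\int_0^{1/2}\eta(y)y^n\,dy$, the claimed main term.

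For $\mu'\neq\mu$ I would bound $|A^n_{j}-A^n_{j-1}|\le|A^n_{j}|+|A^n_{j-1}|\lesssim e^{-\gamma K_0|\mu'-\mu|}$ via Lemma~\ref{lemma wavelet prop}\ref{coeff decay}, using that both $|\tfrac{j}{2}|$ and $|\tfrac{j-1}{2}|$ are $\ge K_0|\mu'-\mu|-\tfrac12$. Summing over $\mu'\neq\mu$ and multiplying by $2^k2^{N(-s+1/q)}$, the total off‑diagonal contribution is at most
\[
C\,2^{N(-s+1/q-n-1)}\sum_{d\ne0}e^{-\gamma K_0|d|}\;\le\;C'e^{-\gamma K_0}\,2^{N(-s+1/q-n-1)},
\]
with $C,C'$ depending only on $\psi_n$ and $\eta$. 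Since $\Tilde{A}\ne0$ by \eqref{leading terms not equal} and $\bigl|\int_0^{1/2}\eta(y)y^n\,dy\bigr|\ge1$ by \eqref{testbig}, I would finally fix $K_0$ so large that $C'e^{-\gamma K_0}\le\tfrac{|\Tilde{A}|}{2}\bigl|\int_0^{1/2}\eta(y)y^n\,dy\bigr|$; the triangle inequality applied to main term $\pm$ tail then gives both the lower bound $\tfrac{|\Tilde{A}|}{2}2^{N(-s+1/q-n-1)}\bigl|\int_0^{1/2}\eta(y)y^n\,dy\bigr|$ and the upper bound $2|\Tilde{A}|2^{N(-s+1/q-n-1)}\bigl|\int_0^{1/2}\eta(y)y^n\,dy\bigr|$.

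The main obstacle is the off‑diagonal analysis: estimating $\langle\eta_{k,K_0\mu'},\psi_{k,K_0\mu}\rangle$ by the crude bound $|\psi|\lesssim e^{-\gamma|\cdot|}$ from property~\ref{prop exp decay} alone would miss the factor $2^{-Nn}$ carried by the diagonal term, so the tail would swamp the main term as $N\to\infty$ (recall $N\to\infty$ as $\Lambda\to\infty$). The resolution is to notice that the cancellation producing the main term --- vanishing moments of $\eta$ against the matching low‑order Taylor coefficients of $\psi$ --- is not special to the point $1/2$ but occurs at every half‑integer, so each off‑diagonal inner product again gains the factor $2^{-Nn}$; only after this gain is extracted does one need the exponential decay of the \emph{leading} coefficient (Lemma~\ref{lemma wavelet prop}\ref{coeff decay}) to make the sum over $\mu'$ converge and to supply the small factor $e^{-\gamma K_0}$.
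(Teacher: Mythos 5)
Your proof is correct and follows essentially the same route as the paper's: split into the diagonal term $\mu'=\mu$ (yielding $\Tilde{A}\int_0^{1/2}y^n\eta(y)\,dy$ after the moment cancellations from Lemma~\ref{lemma wavelet prop}\ref{coeff lower} and the oddness of $y^n\eta(y)$) and off‑diagonal terms controlled via Lemma~\ref{lemma wavelet prop}\ref{coeff decay}, then absorb the tail by taking $K_0$ large; the change of variables is written differently but is the same computation. One small note: your computation (correctly) produces $\int_0^{1/2}\eta(y)y^n\,dy$, whereas the displayed statement of Proposition~\ref{inner product lower bound} in the paper has $\int_0^{1/2}\eta(y)y\,dy$ with the exponent $n$ inadvertently dropped — a typo that also appears in the last two displays of the paper's own proof.
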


\begin{proof}
Using the definition of $\Upsilon_k$, we get
\[2^k\left\langle \Upsilon_k,\psi_{k,K_0\mu}\right\rangle=2^{N(-s+1/q)}\sum_{\mu'=0}^{2^k-1}2^k\left\langle \eta_{k,K_0\mu'},\psi_{k,K_0\mu}\right\rangle.\]
Now, we have 
\begin{align*}
&2^k\left\langle \eta_{k,K_0\mu'},\psi_{k,K_0\mu}\right\rangle
=2^k\int\eta(2^{k+N}(x-2^{-k}K_0\mu'-2^{-k-1}))\psi(2^kx-K_0\mu)\,dx\\
&=\int_0^{1}\eta\bigg(2^N\bigg(y-\frac{1}{2}\bigg)\bigg)\psi\bigg(y+\frac
{\lambda}{2}\bigg)\,dy\\
&=\int_0^{1/2}\eta\bigg(2^N\bigg(y-\frac{1}{2}\bigg)\bigg)\psi\bigg(y+\frac
{\lambda}{2}\bigg)\,dy+\int_{1/2}^{1}\eta\bigg(2^N\bigg(y-\frac{1}{2}\bigg)\bigg)\psi\bigg(y+\frac
{\lambda}{2}\bigg)\,dy,
\end{align*}
where $\lambda=2K_0(\mu'-\mu)$. We observe that $y+\frac{\lambda}{2}$ lies in $\big[\frac{\lambda}{2},\frac{\lambda+1}{2}\big]$ in the first integral and 
in $\big[\frac{\lambda+1}{2},\frac{\lambda+2}{2}\big]$ in the second one. Hence we can use formulations (\ref{left wave form}) and (\ref{right wave form}) of $\psi$ (with $\theta=\lambda+1$), for the left and right integral, respectively. By arguing as in the proof of Lemma \ref{lemma pou lower bound}, using \eqref{moment cancel test func} instead of \eqref{triebel lizorkin moment cancel}, it is easy to see that the lower degree terms cancel out, and we obtain
\begin{multline*}
2^k\left\langle \eta_{k,K_0\mu'},\psi_{k,K_0\mu}\right\rangle=\\ A_{\lambda}^n\int_0^{1/2}\eta\bigg(2^N\bigg(y-\frac{1}{2}\bigg)\bigg)\bigg(y-\frac{1}{2}\bigg)^n\,dy+A_{\lambda+1}^n\int_{1/2}^1\eta\bigg(2^N\bigg(y-\frac{1}{2}\bigg)\bigg)\bigg(y-\frac{1}{2}\bigg)^n\,dy.     
\end{multline*}
Applying a change of variables, we get
\begin{align*}
&2^k\left\langle \eta_{k,K_0\mu'},\psi_{k,K_0\mu}\right\rangle
=A_{\lambda}^n\int_{-1/2}^0y^n\eta(2^Ny)\,dy+A_{\lambda+1}^n\int_0^{1/2}y^n\eta(2^Ny)\,dy\\
&=2^{-(n+1)N}\left(A_{\lambda}^n\int_{-1/2}^0y^n\eta(y)\,dy+A_{\lambda+1}^n\int_0^{1/2}y^n\eta(y)\,dy\right)\\
&=2^{-(n+1)N}(A_{\lambda+1}^n-A_{\lambda}^n)\int_0^{1/2}y^n\eta(y)\,dy,
\end{align*}
where in the last step we have used the fact that $y^n\eta(y)$ is odd.

\noindent When $\mu=\mu'$ and $\lambda=0$, we conclude that
\[2^k\left\langle \eta_{k,K_0\mu},\psi_{k,K_0\mu}\right\rangle=2^{-(n+1)N}\Tilde{A}\int_0^{1/2}y^n\eta(y)\,dy.\]

\noindent When $\mu\neq\mu'$, we take
absolute values and use the exponential decay of the leading co-efficients (part \ref{coeff decay} of Lemma \ref{lemma wavelet prop}) to obtain
\begin{align*}
\abs{2^k\left\langle\eta_{k,K_0\mu'},\psi_{k,K_0\mu}\right\rangle}
&\leq 8C2^{-(n+1)N}e^{\gamma}e^{-\gamma\abs{\lambda/2}}\left|\int_0^{1/2}y^n\eta(y)\,dy\right| \\
&=8C2^{-(n+1)N}e^{\gamma}e^{-\gamma K_0\abs{\mu'-\mu}}\left|\int_0^{1/2}y^n\eta(y)\,dy\right|.
\end{align*}
Combining the two estimates above, we have
\begin{align*}
|2^k\left\langle\Upsilon_k,\psi_{k,K_0\mu}\right\rangle|
&=2^{N(-s+1/q)}\abs{\sum_{\mu'=0}^{2^k-1}2^k\left\langle \eta_{k,K_0\mu'},\psi_{k,K_0\mu}\right\rangle}\\
&\geq 2^{N(-s+1/q)}\left(2^k\left|\left\langle \eta_{k,K_0\mu},\psi_{k,K_0\mu}\right\rangle\right|-\abs{\sum_{\mu'\neq\mu}2^k\left\langle \eta_{k,K_0\mu'},\psi_{k,K_0\mu}\right\rangle}\right)\\
&\geq 2^{N(-s+1/q)}2^{-(n+1)N}\left|\int_0^{1/2}\eta(y)y\,dy\right|\left(\abs{\Tilde{A}}-8C\sum_{\mu'\neq\mu}e^{\gamma}e^{-\gamma K_0\abs{\mu'-\mu}}\right)
\end{align*}
Similarly, by using triangle inequality, we have
\begin{align*}
|2^k\left\langle\Upsilon_k,\psi_{k,K_0\mu}\right\rangle|
&\leq 2^{N(-s+1/q)}\left(2^k\left|\left\langle \eta_{k,K_0\mu},\psi_{k,K_0\mu}\right\rangle\right|+\abs{\sum_{\mu'\neq\mu}2^k\left\langle \eta_{k,K_0\mu'},\psi_{k,K_0\mu}\right\rangle}\right)\\
&\leq 2^{N(-s+1/q)}2^{-(n+1)N}\left|\int_0^{1/2}\eta(y)y\,dy\right|\left(\abs{\Tilde{A}}+8C\sum_{\mu'\neq\mu}e^{\gamma}e^{-\gamma K_0\abs{\mu'-\mu}}\right)\\
\end{align*}
We choose $K_0$ large enough so that 
\[\bigg|4C\sum_{\mu'\neq\mu}e^{\gamma}e^{-\gamma K_0\abs{\mu'-\mu}}\bigg|\leq \frac{|\Tilde{A}|}{2},\]
which gives us the desired result.
\end{proof}

\begin{prop}
\label{convolution bound}
For $x\in J_{k,K_0\mu}$ and $\mu\neq\mu'$, we have that \[\abs{\phi_k*\psi_{k,K_0\mu'}(x)}\leq Ce^{7\gamma/8}e^{-\gamma K_0\abs{\mu-\mu'}}.\]
\end{prop}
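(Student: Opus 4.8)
The plan is to exploit the fact that $\phi_k$ is supported in an interval of length $2^{-k-3}$ centred at $0$, while $J_{k,K_0\mu}$ lives near $2^{-k}K_0\mu$, so that the convolution $\phi_k * \psi_{k,K_0\mu'}(x)$, for $x \in J_{k,K_0\mu}$, only samples $\psi_{k,K_0\mu'}$ on an interval at distance $\gtrsim 2^{-k}K_0|\mu-\mu'|$ from the ``active'' interval $I_{k,K_0\mu'}$ where $\psi_{k,K_0\mu'}$ has its natural size. By a change of variables reducing to $k=0$, it suffices to bound $\phi * \psi_{0,K_0\mu'}(x) = \int \phi(y)\,\psi(x-y-K_0\mu')\,dy$ for $x$ in (the rescaled copy of) $J$, i.e. $x \in [1/4,3/4] + K_0\mu$.

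First I would isolate the point that, because $\phi$ is supported in $(-2^{-4},2^{-4})$ and $x$ ranges over an interval inside $[1/4,3/4]$, the argument $x - y$ ranges over a set contained in, say, $[1/8, 7/8]$, hence $x - y - K_0(\mu'-\mu)$ lies in an interval of length $<1$ contained in $[\tfrac{K_0(\mu-\mu')+1}{8}, \tfrac{8K_0(\mu-\mu')+7}{8}]$-type bounds; the key extracted fact is that $|x-y-K_0\mu'| \geq \tfrac{7}{8}K_0|\mu-\mu'|$ (using $K_0 \geq 1$ and $|\mu-\mu'|\geq 1$ to absorb the $O(1)$ shifts, and the fact that $J \subset [1/4,3/4]$ keeps us a definite distance from the integer $K_0\mu$). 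Then, exactly as in the proof of Proposition~\ref{inner product lower bound}, I would expand $\psi$ on the relevant half-integer interval using the polynomial representations \eqref{left wave form}, \eqref{right wave form}, observe via the moment cancellation \eqref{triebel lizorkin moment cancel} for $\phi$ (valid up to degree $n+1 \geq n$) that all the non-leading terms integrate to zero, and reduce to the leading coefficient $A^n_\theta$ for the appropriate $\theta$. Applying Lemma~\ref{lemma wavelet prop}\ref{coeff decay} gives $|A^n_\theta| \leq 4Ce^{\gamma/2}e^{-\gamma|\theta/2|}$, and since $|\theta/2|$ is comparable to the distance computed above, namely $|\theta/2| \geq \tfrac{7}{8}K_0|\mu-\mu'|$, this yields the bound $|A^n_\theta| \lesssim C e^{7\gamma/8}$ times $e^{-\gamma K_0|\mu-\mu'|}$ (one must be slightly careful whether the convolution integral splits across a half-integer point; if so one simply gets two terms, each controlled the same way, and the worst constant is retained).

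The remaining ingredient is that the leading-term integral itself is bounded: after the $n$-fold integration by parts as in Lemma~\ref{lemma pou lower bound} it becomes a combination of values of $\Phi_{n+1}$, which is a fixed bounded compactly supported function, times $n!$; combined with $\|\phi\|_{L^1}\le 1$ one gets a universal constant, absorbed into the implicit dependence (or one tracks it to land exactly on the stated $Ce^{7\gamma/8}$ — note the exponent $7/8$ in the statement is precisely what the $\tfrac78$ distance bound produces, with room to spare against the $e^{\gamma/2}$ and $O(1)$ factors once $K_0$ is large, which is consistent with the fact that $K_0$ has already been fixed large in Proposition~\ref{inner product lower bound}).

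I expect the main obstacle to be purely bookkeeping: getting the geometry right so that the exponent in the final decay is honestly $\gamma K_0|\mu-\mu'|$ rather than $\gamma K_0|\mu-\mu'| - O(1)$, and matching the prefactor to $Ce^{7\gamma/8}$ rather than some larger constant. This is handled by the crude inequality $|\theta/2| \ge \tfrac78 K_0|\mu-\mu'|$ above, which has enough slack (since $|\mu-\mu'|\ge 1$ and $K_0$ is large) to swallow the $e^{\gamma/2}$ from Lemma~\ref{lemma wavelet prop}\ref{coeff decay}, the $4C$, and the bounded leading integral, leaving the clean statement. No new idea beyond those already used in Lemmas~\ref{lemma wavelet prop} and \ref{lemma pou lower bound} and Proposition~\ref{inner product lower bound} is needed.
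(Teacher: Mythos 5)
Your proposal is substantially more elaborate than the paper's proof and, more importantly, it contains a bookkeeping error that would not yield the stated bound. The paper's argument for this proposition does \emph{not} go through the polynomial representations, the moment cancellation for $\phi$, or the coefficient‐decay estimate of Lemma~\ref{lemma wavelet prop}\ref{coeff decay}. It is a two–line pointwise estimate: after rescaling, $\phi_k*\psi_{k,K_0\mu'}(x)=\int\phi(y)\psi(x_1-y-K_0\mu')\,dy$ with $x_1=2^kx\in K_0\mu+[1/4,3/4]$; the triangle inequality gives the \emph{additive} lower bound $|x_1-y-K_0\mu'|\geq K_0|\mu-\mu'|-7/8$; one then plugs this directly into Property~\ref{prop exp decay}, i.e.\ $|\psi(z)|\leq Ce^{-\gamma|z|}$, and uses $\|\phi\|_{L^1}\leq 1$. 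The additive slack of $7/8$ is exactly what produces the prefactor $e^{7\gamma/8}$ against the full decay rate $e^{-\gamma K_0|\mu-\mu'|}$.

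The step in your write‑up that fails is the transition from $|\theta/2|\geq\tfrac{7}{8}K_0|\mu-\mu'|$ to the conclusion ``$|A^n_\theta|\lesssim Ce^{7\gamma/8}e^{-\gamma K_0|\mu-\mu'|}$.'' A \emph{multiplicative} lower bound of the form $\tfrac{7}{8}K_0|\mu-\mu'|$, fed into $e^{-\gamma|\theta/2|}$, gives $e^{-(7\gamma/8)K_0|\mu-\mu'|}$, which is a strictly worse decay rate, not the stated $e^{7\gamma/8}e^{-\gamma K_0|\mu-\mu'|}$. These two expressions agree only at $K_0|\mu-\mu'|=1$, and for larger values your bound is exponentially weaker in the decay rate. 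The correct geometry is the additive $K_0|\mu-\mu'|-7/8$, and then the $e^{7\gamma/8}$ pops out by simple algebra; you conflated the two forms. Beyond this, the entire machinery you invoke (polynomial split, integration by parts, moment cancellation, Lemma~\ref{lemma wavelet prop}\ref{coeff decay}) is unnecessary here because Property~\ref{prop exp decay} already gives pointwise exponential decay of $\psi$ itself, which is strictly more information than decay of the leading coefficient $A^n_\theta$. The heavier argument is needed in Proposition~\ref{inner product lower bound}, where one must see a \emph{lower} bound and precise cancellation matters, but for the present upper bound a direct estimate suffices.
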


\begin{proof}
\[\abs{\phi_k*\psi_{k,K_0\mu'}(x)}=\left|\int\phi_k(y)\psi(2^k(x-y)-K_0\mu')\,dy\right|
=\left|\int \phi(y)\psi(x_1-y-K_0\mu')\,dy\right|\]
where $x_1=2^kx\in J_{0,K_0\mu}\subset K_0\mu+[1/4,3/4]$. We observe that
\[\abs{x_1-y-K_0\mu'}\geq \abs{K_0(\mu-\mu')}-\abs{x_1-y-K_0\mu}\geq\abs{K_0(\mu-\mu')}-7/8.\]
Combining this with the fact that \[\abs{\psi(x)}\leq Ce^{-\gamma\abs{x}},\]
we obtain
\[\abs{\phi_k*\psi_{k,\mu'}(x)}\leq C\int \abs{\phi(y)}e^{-\gamma K_0{\abs{\mu-\mu'}}}e^{7\gamma/8}\leq Ce^{7\gamma/8}e^{-\gamma K_0\abs{\mu-\mu'}}.\]
\end{proof}

\section{Lower Bounds for the Non-Endpoint Case}
\label{sec lower bounds}
In this section, we prove the following, which can be interpreted as a quantitative version of Theorem \ref{main qual thm} for the non-endpoint case.
\begin{thm}
\label{main thm quant}
Let $\Lambda>10$ and let $\gamma_*(\fspq)$ be as defined in \eqref{eq lower proj numbers}. 
\begin{enumerate}
    \item \label{main thm quant, part 1} For $1<p<q<\infty$, $1/q+n<s<1/p+n$, we have \[\gamma_*(\fspq, \Lambda)\gtrsim_{p,q,s} \Lambda^{s-1/q-n}.\]
    \item \label{main thm quant, part 2} For $1<q<p<\infty$, we have $-1/p'-n<s<-1/q'-n$, \[\gamma_*(\fspq, \Lambda)\gtrsim_{p,q,s} \Lambda^{-1/q'-n-s}.\]
\end{enumerate}
\end{thm}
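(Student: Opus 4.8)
The plan is to follow the two-part structure of the theorem by exhibiting, for each admissible $s$, a set $A$ of wavelet frequencies with $\#A \geq \Lambda$ and a test function of the form \eqref{test function} whose image under the projection $P_E$ (for a suitable $E$ with $SF(E)\subset A$) has large $\fspq$ norm relative to the input. Concretely, given $\Lambda$, I would pick $N$ with $2^N \leq \#A < 2^{N+1}$ as in \eqref{set freq}, let $A = \{2^k : 0 \leq k \leq N-1\}$ (say), and take $f_t$ as in \eqref{test function} with the random signs $r_k(t)$. By Lemma \ref{test norm}, $\|f_t\|_{\fspq} \lesssim 1$ uniformly in $t$. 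The content is then to lower-bound $\|P_E f_t\|_{\fspq}$ for a well-chosen collection $E$ of wavelets $\psi_{k,K_0\mu}$, with $2^k$ ranging over $A$ and $\mu$ over a suitable range of integers in $[0,2^k-1]$.

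The key computational input is already in hand: Proposition \ref{inner product lower bound} says $|2^k\langle \Upsilon_k, \psi_{k,K_0\mu}\rangle| \approx 2^{N(-s+1/q-n-1)}$ (with constants depending only on $\psi_n$ and $\eta$), so that $|\langle f_t, \psi_{k,K_0\mu}\rangle| \approx 2^{-N/q} 2^{-ks} 2^{N(-s+1/q-n-1)} = 2^{-ks} 2^{N(-2s + 1/q - 1/q - n - 1)}$—more precisely the coefficient carries the factor $2^{-ks} 2^{-N(n+1)} 2^{N(-s+1/q)} 2^{-N/q} = 2^{-ks - Ns - N(n+1)}$, i.e. $2^{-(k+N)s}2^{-N(n+1)}$ up to the fixed constant $|\tilde A|\,|\!\int_0^{1/2}\eta(y)y^n\,dy|$. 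Then $P_E f_t = \sum_{2^k\in A}\sum_{\mu} \langle f_t,\psi_{k,K_0\mu}\rangle \psi_{k,K_0\mu}$, and I would estimate its $\fspq$-norm from below using the local means characterization \eqref{local means char}: apply $\phi_j*$ to $P_E f_t$, and on the region $J_{k,K_0\mu}$ (where by \eqref{convol big} $|\phi_k * \psi_{k,K_0\mu}| \geq c_0$) the diagonal term $j=k$ dominates, while Proposition \ref{convolution bound} together with the $K_0$-separation controls the off-diagonal contributions $\mu'\neq\mu$ and the off-scale contributions $j\neq k$ (here the moment cancellation \eqref{triebel lizorkin moment cancel} of $\phi$ against the polynomial pieces of $\psi$, plus exponential decay, gives the needed decay in $|j-k|$). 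This reduces the lower bound to estimating $\|(\sum_k 2^{kqs}|\phi_k * P_E f_t|^q)^{1/q}\|_p$ from below by a sum over the "good" intervals, which—after inserting the coefficient size and counting $\#\{(k,\mu)\} \approx \sum_{k<N} 2^k/K_0 \approx 2^N/K_0$—yields a bound of the shape $2^{N(\cdot)}$; optimizing (or rather, reading off) the exponent against $\Lambda \approx 2^N$ gives $\Lambda^{s-1/q-n}$ in case \ref{main thm quant, part 1} and, by the dual construction, $\Lambda^{-1/q'-n-s}$ in case \ref{main thm quant, part 2}.

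For part \ref{main thm quant, part 2}, rather than redo everything I would invoke duality: $\gamma_*(\fspq,\Lambda) = \gamma_*(F^{-s}_{p',q'},\Lambda)$ since $P_E$ is self-adjoint (it is an orthogonal projection in $L^2$) and $(\fspq)^* = F^{-s}_{p',q'}$, so the case $1<q<p<\infty$, $-1/p'-n < s < -1/q'-n$ transforms into $1<p'<q'<\infty$, $1/q'+n < -s < 1/p'+n$, which is exactly the setting of part \ref{main thm quant, part 1} with $(p,q,s)$ replaced by $(p',q',-s)$. This is consistent with the remark in the excerpt that "by duality, $\gamma_*(\Lambda)=\infty$ for $s \leq -n+1/p'$," and it means I only need to carry out the analysis honestly in the regime $p < q$, $s > 1/q + n$.

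The main obstacle I anticipate is the off-scale ($j \neq k$) and off-translate ($\mu'\neq\mu$) analysis in the lower bound for $\|\phi_j * P_E f_t\|$. On the diagonal the estimate is clean, but to conclude that the diagonal genuinely dominates I need the error from all other $(j,\mu')$ pairs—summed in the $\ell^q$ norm over $j$ and then integrated in $L^p$—to be a small multiple of the main term, uniformly in the random signs; this is where the exponential-decay tails (property \ref{prop exp decay}), absent in the Haar case of \cite{seeger2017haar}, must be tamed by choosing $K_0$ large (as in Propositions \ref{inner product lower bound} and \ref{convolution bound}) and by exploiting that $\phi$ kills polynomials up to degree $n+1$ so that $\phi_j*\psi_{k,K_0\mu}$ decays like $2^{-|j-k|(n+1)}$ for $j>k$ and like $2^{-(k-j)}\cdot(\text{tail})$ for $j<k$. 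Handling the randomization—using Khintchine's inequality to pass from a fixed $t$ to an average and back, exactly as in \cite{seeger2017haar}—is routine once the deterministic kernel estimates are in place, so I expect the bulk of the work to be bookkeeping these tail terms rather than any genuinely new idea.
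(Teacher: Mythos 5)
You have the overall architecture right — randomized test functions built from $\Upsilon_k$, local means, Khinchine's inequality, diagonal domination via Propositions \ref{inner product lower bound} and \ref{convolution bound}, and an appeal to duality to transfer between the two parts — but you have chosen the \emph{wrong} direction for the duality reduction, and this is a genuine gap rather than a cosmetic one. You propose to ``carry out the analysis honestly in the regime $p<q$, $s>1/q+n$'' (i.e.\ part \ref{main thm quant, part 1}) and get part \ref{main thm quant, part 2} by duality. The paper does the opposite, and for a reason: the direct argument only works when $q\leq p$. Two places in the machinery rely essentially on $q\leq p$. First, the Fubini–Khinchine computation in Proposition \ref{prop main} produces a lower bound on $\big(\int_0^1\!\int_0^1\|G\|_{L^q([-1,K_0])}^q\,dt_1dt_2\big)^{1/q}$, and one passes from there to the $L^p$ norm via H\"older on the bounded interval $[-1,K_0]$: this gives $\|G\|_{L^p([-1,K_0])}\gtrsim\|G\|_{L^q([-1,K_0])}$ only when $p\geq q$. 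When $p<q$ the inequality goes the other way and the $L^q$ lower bound tells you nothing about $\|G\|_{L^p}$. Second, the control of the test function's norm in Proposition \ref{prop tech}\ref{prop tech 2} (the $BMO_{\mathrm{dyad}}$–$L^q$ interpolation step) also requires $q\leq p$. Moreover, the test function $f_t$ as built in \eqref{test function} is tuned to produce the exponent $2^{N(-s-1/q'-n)}$, which only blows up when $s<-1/q'-n$, i.e.\ in the regime of part \ref{main thm quant, part 2}; plugging a value of $s$ in the range of part \ref{main thm quant, part 1} into the same construction gives a lower bound that \emph{decays} in $N$ and is useless. So the statement ``I only need to carry out the analysis honestly in the regime $p<q$'' is precisely backwards.

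To repair the argument, reverse the roles: prove part \ref{main thm quant, part 2} directly (where $1<q<p<\infty$ so H\"older and the $BMO$ interpolation both apply and the test function gives a divergent lower bound $2^{N(-s-1/q'-n)}$), and deduce part \ref{main thm quant, part 1} by the duality $\gamma_*(\fspq,\Lambda)=\gamma_*(F^{-s}_{p',q'},\Lambda)$, since $(p,q,s)$ with $1<p<q<\infty$ and $1/q+n<s<1/p+n$ maps under $(p,q,s)\mapsto(p',q',-s)$ into the regime of part \ref{main thm quant, part 2}. This is exactly what Remark \ref{remark duality} records. One smaller imprecision: $\gamma_*$ in \eqref{eq lower proj numbers} is an infimum over all $A$ with $\#A\geq\Lambda$, so you must run the construction for an \emph{arbitrary} such $A$ (as in \eqref{set freq}); fixing $A=\{2^k:0\leq k\leq N-1\}$ ``for concreteness'' does not suffice to bound the infimum. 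The rest of your sketch — the coefficient size from Proposition \ref{inner product lower bound}, the good intervals $J_{k,K_0\mu}$ from \eqref{convol big}, the off-translate control via Proposition \ref{convolution bound} with $K_0$ large, and the self-adjointness of $P_E$ — matches the paper.
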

In other words, the magnitude of $\mathcal{G}(\fspq, A)$ depends on the cardinality of $A$ alone. 

\begin{rem}
\label{remark duality}
The statements for \eqref{main thm quant, part 1} and \eqref{main thm quant, part 2} above are equivalent, by a standard argument using the duality of the Triebel-Lizorkin spaces
\[(\fspq)^*=F^{-s}_{p',q'}.\]
We refer the reader to [\cite{seeger2017haar}, Section 2.3 for the details. Consequently, it suffices to prove only the second assertion above.
\end{rem}
The following proposition is the main ingredient in the proof.
\begin{prop}
\label{prop main}
Let $-1<s\leq -1/q'-n$. Let $f_t$ as in (\ref{test function}). Then there exists a $c>0$ such that 
\[\left(\int_0^1\int_0^1\|T_{t_1}f_{t_2}\|_{F^s_{p,q}}^q\,dt_1dt_2\right)^{1/q}\geq c2^{N(-s-1/q'-n)}.\]
\end{prop}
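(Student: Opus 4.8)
The plan is to estimate the Triebel-Lizorkin norm of $T_{t_1}f_{t_2}$ from below by isolating, at each scale $2^k$ with $2^k\in A$, the contribution of the local means $\phi_k*(T_{t_1}f_{t_2})$ on the union of the intervals $J_{k,K_0\mu}$ where Lemma~\ref{lemma pou lower bound} gives a lower bound. Here $T_{t_1}$ is the randomized projection $\sum_{2^k\in A} r_k(t_1) P_{E_k}$ built from the wavelet family, so that $T_{t_1}f_{t_2} = \sum_{k,\mu} c_{k,\mu}(t_1,t_2)\,\psi_{k,K_0\mu}$ with coefficients $c_{k,\mu} = r_k(t_1)\langle f_{t_2},\psi_{k,K_0\mu}\rangle$; by Proposition~\ref{inner product lower bound} each such inner product has modulus $\approx 2^{-N(s-1/q+n+1)}2^{-ks}\cdot 2^{-N/q}\cdot 2^{N(-s+1/q)}$, i.e.\ the diagonal term $\mu=\mu'$ dominates and the off-diagonal tail is absorbed by the choice of $K_0$.

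First I would compute $\phi_k*(T_{t_1}f_{t_2})$ on $J_{k,K_0\mu}$. The main point is a frequency-separation/support argument: $\phi_k$ is supported at scale $2^{-k}$ with the right moment cancellation \eqref{triebel lizorkin moment cancel}, so $\phi_k$ applied to wavelet pieces at scales $\neq k$ is small (for coarser scales, by the moment cancellation of $\phi$ killing the polynomial pieces of $\psi$; for finer scales, by smoothness of $\phi$ and cancellation of $\psi$), while $\phi_k*\psi_{k,K_0\mu}$ is $\gtrsim c_0$ on $J_{k,K_0\mu}$ by \eqref{convol big} and the interaction with $\psi_{k,K_0\mu'}$, $\mu\neq\mu'$, is exponentially small in $K_0|\mu-\mu'|$ by Proposition~\ref{convolution bound}. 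Thus on $J_{k,K_0\mu}$ we get $|\phi_k*(T_{t_1}f_{t_2})| \gtrsim |c_{k,\mu}(t_1,t_2)| - (\text{tails})$, and choosing $K_0$ large makes the tails a small fraction of the main term. Then $\big(\sum_k 2^{ksq}|\phi_k*(T_{t_1}f_{t_2})|^q\big)^{1/q} \gtrsim \big(\sum_{2^k\in A} 2^{ksq}|c_{k,\mu(x)}|^q \mathbf{1}_{J_{k,K_0\mu(x)}}(x)\big)^{1/q}$, and since the intervals $J_{k,K_0\mu}$ at a fixed scale are $\gtrsim 2^{-k}$-separated and $K_0$-spread, integrating the $p$-th power over $x$ and using $\sum_\mu |J_{k,K_0\mu}| \approx 1$ (the function lives on $[0,K_0]$) gives, after also integrating in $(t_1,t_2)$ and invoking Khintchine's inequality to replace $r_k(t_1)$ by an $\ell^2$ (hence, since we want a lower bound and $q$ could be on either side of $2$, an $\ell^q$-type after Hölder) sum over $k$,
\[
\left(\int_0^1\!\!\int_0^1 \|T_{t_1}f_{t_2}\|_{\fspq}^q\,dt_1dt_2\right)^{1/q} \gtrsim \Big(\#A \cdot \big(2^{-N(s-1/q+n+1)}\cdot 2^{-N/q+N(-s+1/q)}\big)^{q} \cdot 2^{\text{(scale powers cancel)}}\Big)^{1/q}.
\]
Tracking the powers of $2^N$: the coefficient $c_{k,\mu}$ carries $2^{-N/q}\cdot 2^{N(-s+1/q)}\cdot 2^{-N(s-1/q+n+1)}\cdot 2^{-ks}$ and the $2^{ks}$ in the norm cancels the $2^{-ks}$, and $\#A \approx 2^N$, so the bound becomes $\approx 2^{N/q}\cdot 2^{N(-2s+1/q-n-1)}$... which I must reconcile with the target exponent $-s-1/q'-n$; the correct bookkeeping (using $1/q' = 1-1/q$) will show the Rademacher/Khintchine step contributes the compensating $2^{N/q}$ versus $2^{N\cdot 2/q}$ discrepancy, landing on $2^{N(-s-1/q'-n)}$.

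The main obstacle I expect is precisely this bookkeeping combined with the interplay between the $\ell^q$ structure of the $\fspq$ norm and the $L^2$-orthogonality underlying Khintchine: because $q\neq 2$ in general, one cannot freely pass between $\big(\sum_k |r_k(t)a_k|^q\big)^{1/q}$ and $\big(\sum_k |a_k|^2\big)^{1/2}$ inside an $L^p(L^q)$ norm, and the resolution (as in \cite{seeger2017haar}) is to exploit that for a \emph{single} $x$ only one value of $\mu=\mu(x)$ is active at each scale, so the inner $\ell^q$ sum over $k$ at fixed $x$ is a genuine $\ell^q$ sum of $\#A$ roughly equal terms — giving the clean factor $(\#A)^{1/q}$ — after which Fubini and Khintchine in $t_1$ (for the fixed sign pattern of the $c_{k,\mu}$, using that $|c_{k,\mu}|$ is comparable across $k$) is applied only to extract the lower bound uniformly. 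A secondary technical nuisance, flagged in the introduction, is the tail terms from non-compact support: one must verify that summing the exponentially small off-diagonal and off-scale contributions over all $\mu'$ and all scales in $A$ still leaves a term $\lesssim \tfrac12 |c_{k,\mu}|$, which is exactly what the choice of $K_0$ in Propositions~\ref{inner product lower bound} and~\ref{convolution bound} secures, provided one sums a convergent geometric series rather than a divergent one — so I would double-check that the off-scale decay is summable, using the moment cancellation order $M_1, n+1$ of $\phi$ against the polynomial degree $n$ of $\psi$.
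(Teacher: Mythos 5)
Your proposal takes a genuinely different route that leaves a gap. You try to show \emph{deterministically}, for every fixed $(t_1,t_2)$, that on $J_{k,K_0\mu}$ only the diagonal scale contributes to $\phi_k*(T_{t_1}f_{t_2})$, via a frequency-separation argument. Unpacking $T_{t_1}f_{t_2}$, there are \emph{three} scale indices in play: the local-means index $k$, the wavelet index $j$, and the test-function index $l$ appearing through $\langle \Upsilon_l,\psi_{j,K_0\mu'}\rangle$. Your argument would need both $\phi_k*\psi_{j,K_0\mu'}\approx 0$ for $j\neq k$ \emph{and} $\langle\Upsilon_l,\psi_{j,K_0\mu'}\rangle\approx 0$ for $l\neq j$; Proposition~\ref{inner product lower bound} only treats the diagonal $l=j=k$. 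You flag the off-scale summability as something to ``double-check,'' but it is precisely the crux, and it is not clearly small enough: for $j<k$ the convolution $\phi_k*\psi_{j,K_0\mu'}(x)$ is nonzero near knots of $\psi_{j,K_0\mu'}$ with size $\sim 2^{(j-k)n}$, while the coefficient weight $2^{-js}$ \emph{grows} as $j$ decreases (since $s<0$), so the ratio to the $j=k$ term is $\sim 2^{(j-k)(n-s)}$, whose geometric sum becomes large as $n-s\to 0$ (e.g.\ $n=0$, $s=-1/q'$, $q\to 1$) and cannot unconditionally be absorbed into the fixed constant $c_0$ of Lemma~\ref{lemma pou lower bound}. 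The Khinchine you invoke is over the index $k$ inside the $\ell^q$ part of the $\fspq$-norm, which does not decouple $(j,l)$ and does not rescue this. Your bookkeeping is also off by a factor $2^{N(-s+1/q)}$ (it is double-counted: it sits inside the definition of $\Upsilon_k$ in \eqref{k-levelfunc} \emph{and} is already reflected in the conclusion of Proposition~\ref{inner product lower bound}), and you defer the resolution to an unspecified ``compensating discrepancy.''

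The paper sidesteps the entire cross-scale analysis with a cleaner move. After restricting to $[-1,K_0]$ and passing from $L^p$ to $L^q$ via H\"older (using $q\le p$), all the integrals are $q$-th-power integrals and can be interchanged, so for fixed $k$ and $x$ one applies Khinchine in \emph{both} $t_1$ and $t_2$ to the \emph{double} Rademacher sum $\sum_{j,l}r_j(t_1)r_l(t_2)(\cdots)$. This produces an $\ell^2$ sum over all pairs $(j,l)$ of nonnegative terms,
\[
\Big(\int_0^1\!\!\int_0^1|\phi_k*(T_{t_1}f_{t_2})(x)|^q\,dt_1\,dt_2\Big)^{1/q}\gtrsim 2^{-N/q}\bigg(\sum_{j,l}\Big|2^{-ls}\sum_\mu 2^j\langle\Upsilon_l,\psi_{j,K_0\mu}\rangle\,\phi_k*\psi_{j,K_0\mu}(x)\Big|^2\bigg)^{1/2},
\]
and since one only needs a lower bound one simply \emph{discards} everything except the single term $(j,l)=(k,k)$. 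No smallness of cross terms is required --- they are thrown away. The surviving diagonal term is exactly what Lemma~\ref{lemma pou lower bound} and Propositions~\ref{inner product lower bound}, \ref{convolution bound} control, after which the bookkeeping closes in a few lines. This ``Khinchine in both $t_1$ and $t_2$ on the double sum, then drop the off-diagonal'' move is the key structural idea your proposal is missing.
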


\begin{proof}
We can rewrite the left hand side of the above inequality as
\[\left(\int_0^1\int_0^1\bigg\|\left(\sum_{k=0}^{\infty}2^{ksq}|\phi_k*T_{t_1}f_{t_2}|^q\right)^{1/q}\bigg\|_{L^p}^q\,dt_1dt_2\right)^{1/q}.\]
Restricting the innermost function to the interval $[-1,K_0]$ and using H\"older's inequality (with $p\geq q$), we can bound the expression above by a positive constant times
\begin{align}
&\left(\int_0^1\int_0^1\bigg\|\left(\sum_{2^k\in A}2^{ksq}|\phi_k*T_{t_1}f_{t_2}|^q\right)^{1/q}\bigg\|_{L^q([-1,K_0])}^q\,dt_1dt_2\right)^{1/q}\nonumber\\
&=\bigg(\sum_{2^k\in A}2^{ksq}\bigg\|\bigg(\int_0^1\int_0^1|\phi_k*T_{t_1}f_{t_2}(x)|^q\,dt_1dt_2\bigg)^{1/q}\bigg\|_{L^q([-1,K_0])}^q\bigg)^{1/q}.
\label{qq norm}
\end{align}
For a fixed $x$ we have
\[\phi_k*T_{t_1}f_{t_2}(x)=2^{-N/q}\sum_{2^j\in A}\sum_{2^l\in A}r_j(t_1)r_l(t_2)2^{-ls}\sum_{\mu=0}^{2^j-1}2^j\langle\Upsilon_l,\psi_{j,K_0\mu}\rangle\phi_k*\psi_{j,K_0\mu}(x).\]
By Khinchine's inequality,
\begin{multline*}
\left (\int_{0}^{1}\int_{0}^{1}\abs{\phi_k*(T_{t_1}f_{t_2})(x)}^q\,dt_1\,dt_2\right )^{1/q} \\ \geq c(q)2^{-N/q}\bigg(\sum_{2^j\in A}\sum_{2^l\in A}
|2^{-ls}\sum_{\mu=0}^{2^j-1}2^j\langle\Upsilon_l,\psi_{j,K_0\mu}\rangle\phi_k*\psi_{j,K_0\mu}(x)|^2\bigg)^{1/2}.    
\end{multline*}
For a given $2^k\in A$, we consider only the terms with $j=k$ and $l=k$, and get
\[\left (\int_{0}^{1}\int_{0}^{1}\abs{\phi_k*(T_{t_1}f_{t_2})(x)}^q\,dt_1\,dt_2\right )^{1/q}\gtrsim 2^{-N/q}\abs{2^{-ks}\sum_{\mu=0}^{2^k-1}2^k\langle\Upsilon_k,\kw\rangle\phi_k*\kw(x)}.\]
Now for $x\in J_{k,K_0\mu}$, we have 
\begin{multline*}
\bigg|\sum_{\mu'=0}^{2^k-1}2^k\langle\Upsilon_k,\kww\rangle\phi_k*\kww(x)\bigg| 
\geq \bigg|2^k\langle\Upsilon_k,\kw\rangle\phi_k*\kw(x)\bigg|-\\ \bigg|\sum_{\mu'\neq\mu}2^k\langle\Upsilon_k,\kww\rangle\phi_k*\kww(x)\bigg|,    
\end{multline*}
which, using Proposition \ref{inner product lower bound}, can be bounded below by a positive constant times
\[\frac{|\Tilde{A}|}{2}2^{N(-s+1/q-n-1)}\left|\int_0^{1/2}\eta(y)y\,dy\right|\bigg(|\phi_k*\kw(x)|-4\sum_{\mu'\neq\mu}|\phi_k*\kww(x)|\bigg).\]
An application of Proposition \ref{convolution bound} to the second term in the brackets then yields
\begin{align*}
&\bigg|\sum_{\mu'=0}^{2^k-1}2^k\langle\Upsilon_k,\kww\rangle\phi_k*\kww(x)\bigg| \\
&\gtrsim 2^{N(-s+1/q-n-1)}\left|\int_0^{1/2}\eta(y)y\,dy\right|\bigg(|\phi_k*\kw(x)|-4C\sum_{\mu'\neq\mu}|e^{7\gamma/8}e^{-\gamma K_0\abs{\mu-\mu'}}|\bigg)\\
&\gtrsim c_02^{N(-s+1/q-n-1)}\left|\int_0^{1/2}\eta(y)y\,dy\right|, \text{ for $c_0$ as defined in (\ref{convol big}) and for $K_0$ sufficiently large.}
\end{align*}
Continuing with the proof, we can bound (\ref{qq norm}) below by
\begin{align*}
&\bigg(\sum_{2^k\in A}2^{ksq}\bigg\|\bigg(\int_0^1\int_0^1|\phi_k*T_{t_1}f_{t_2}(x)|^q\,dt_1dt_2\|\bigg)^{1/q}\bigg\|_{L^q([-1,K_0])}^q\bigg)^{1/q}\\
&\gtrsim \bigg(\sum_{2^k\in A}2^{ksq}\sum_{\mu=0}^{2^k-1}\int_{J_{k,\mu}}\bigg[2^{-N/q}2^{-ks}\bigg|\sum_{\mu'=0}^{2^k-1}2^k\langle\Upsilon_k,\kww\rangle\phi_k*\kww(x)\bigg|\bigg]^q\,dx\bigg)^{1/q}\\
&\gtrsim 2^{N(-s+1/q-n-1)}\left|\int_0^{1/2}\eta(y)y\,dy\right|\bigg(\sum_{2^k\in A}2^{-N}\sum_{\mu=0}^{2^k-1}|J_{k,K_0\mu}|\bigg)^{1/q}\\
&\gtrsim 2^{N(-s+1/q-n-1)},
\end{align*}
where we have used (\ref{testbig}) and (\ref{set freq}), and the fact that $|J_{k,K_0\mu}|\gtrsim 2^{-k}$ in the last step.
\end{proof}
\subsection*{Growth of \texorpdfstring{$\gamma_*(\Lambda)$}{} for \texorpdfstring{$s<-1/q'-n$}{}} We take $A$ as in \eqref{set freq}. Let $f_t$ be as in \eqref{test function}, so that $\|f_t\|\lesssim 1$. By Proposition \ref{prop main}, there exist $t_1,t_2$ in $[0,1]$ so that
\[\|T_{t_1}f_{t_2}\|_{\fspq} \gtrsim 2^{N(-s-1/q'-n)}.\]
Hence, 
\[\|T_{t_1}\|_{\fspq\rightarrow\fspq} \gtrsim c_{p,q,s}2^{N(-s-1/q'-n)}.\]
Now we let
\begin{equation}
\label{plus minus sets}
E^{\pm} :=\{\kw :2^k\in A, r_j(t_1)=\pm 1, \mu=0,\dots, 2^k-1\}.    
\end{equation}
Then we have 
\[T_{t_1}=P_{E^+}-P_{E^-}\]
and we conclude that at least one of $P_{E^+}$ or $P_{E^-}$ has operator norm bounded below by $c_{p,q,s}2^{N(-s-1/q'-n)}$. Since $SF(E^{\pm})\subset A$, we get
\[\mathcal{G}(\fspq, A)\gtrsim 2^{N(-s-1/q'-n)}\]
for $s<-1/q'-n$ and the asserted lower bound for $\mathcal{G}(\fspq, A)$ follows in this range.

\begin{rem}
Like the corresponding argument in [\cite{seeger2017haar}, the above proof is probabilistic in nature. In [\cite{seeger2017lower}, Seeger and Ullrich explicitly constructed subsets of the Haar system for which the corresponding projections have large operator norms. It might be of interest to try to adapt this deterministic approach to the case of orthogonal spline wavelets as well.
\end{rem}

\section{Lower Bounds for the Endpoint Case}
\label{sec end point}
In this section we prove the lower bounds for the endpoint cases $s=1/q+n$ and $s=-1/q'-n$. We still have failure of unconditional convergence here, but with a new phenomenon: the growth rate $\mathcal{G}(F^{n+1/q}_{p,q}, A)$ also depends upon the density of the set $\log_2(A)=\{k:2^k\in A\}$ on intervals of length $\log_2(\#A)$.  We define for any $A$ with $\#A\geq 2$, 
\[\underline{\mathcal{Z}}(A)=\text{min}_{2^m\in A}\#\{k\in \log_2(A):|k-m|\leq \log_2(\#A)\}.\]
Then the following is the analog of Theorem \ref{main thm quant} for the endpoint cases:
\begin{thm}
\label{main thm quant end point}
Let $A=\{2^n:n\geq 0\}$ be a set of large enough cardinality.
\begin{enumerate}
    \item \label{main thm end point quant, part 1} For $1<p<q<\infty$, \[\mathcal{G}(F^{n+1/q}_{p,q},A)\gtrsim_{p,q} \log_2(\#A)^{1/q}\underline{\mathcal{Z}}(A)^{1-1/q}.\]
    \item \label{main thm end point quant, part 2} For $1<q<p<\infty$, \[\mathcal{G}(F^{-n-1/q'}_{p,q},A)\gtrsim_{p,q} \log_2(\#A)^{1-1/q}\underline{\mathcal{Z}}(A)^{1/q}.\]
\end{enumerate}
\end{thm}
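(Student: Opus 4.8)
The plan is to reduce assertion~\eqref{main thm end point quant, part 1} to assertion~\eqref{main thm end point quant, part 2} by the duality $(\fspq)^{*}=F^{-s}_{p',q'}$ recalled in Remark~\ref{remark duality}, which interchanges $s=n+1/q$ with $s=-n-1/q'$ and $q$ with $q'$ (and hence $(\log_2\#A)^{1/q}\underline{\mathcal Z}(A)^{1-1/q}$ with $(\log_2\#A)^{1-1/q}\underline{\mathcal Z}(A)^{1/q}$); thus I only treat $s=-n-1/q'$, $1<q<p<\infty$, for $A=\{2^{0},2^{1},\dots,2^{L}\}$ with $L+1=\#A$ large. Write $R=\lfloor\log_2(\#A)\rfloor$, so $\log_2(A)=\{0,\dots,L\}$ is a full interval and $\underline{\mathcal Z}(A)\approx R$. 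As in Section~\ref{sec lower bounds}, it suffices to produce one test function $g$ together with a sign operator $T=P_{E^{+}}-P_{E^{-}}$, where $E^{\pm}$ are as in \eqref{plus minus sets} for a suitable Rademacher realisation and $SF(E^{\pm})\subset A$, such that $\|Tg\|_{\fspq}\gtrsim(\log_2\#A)^{1-1/q}\,\underline{\mathcal Z}(A)^{1/q}\,\|g\|_{\fspq}$.

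\emph{The stacked test function.} The example of Section~\ref{sec test func} is modified so that, over a common family of spatial cells, the local-means square function registers $\approx\underline{\mathcal Z}(A)$ genuinely distinct wavelet frequencies rather than one. Choose a set $\mathfrak B$ of $\gtrsim R$ base scales in $\log_2(A)$ that is $CR$-separated for a large absolute constant $C$ (so distinct blocks share no wavelet scales and their supports do not meet), and for $m\in\mathfrak B$ put $\mathcal K_m=\{k\in\log_2(A):m\le k\le m+R\}$, a set of $\gtrsim R\gtrsim\underline{\mathcal Z}(A)$ scales. For each $m$ and each $k\in\mathcal K_m$ take the function $\Upsilon_k$ of \eqref{k-levelfunc} (whose translates are $K_0$-separated on $[0,K_0]$ exactly as before), attach an independent Rademacher sign and the weight $2^{-ks}$, sum over $k\in\mathcal K_m$ and then over $m\in\mathfrak B$, and finally normalise by a single scalar $\beta$ so that $\|g\|_{\fspq}\lesssim 1$; since the $\mathcal K_m$ are disjoint this $g$ is, up to $\beta$, a function of the form \eqref{test function} supported on the scale set $\bigcup_m\mathcal K_m$, and Proposition~\ref{prop tech}\,\ref{prop tech 2} (available since $q\le p$) bounds its $\fspq$-norm as in Lemma~\ref{test norm}. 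The essential point is the nesting of the dyadic grids $2^{-k}K_0\ZZ$ across $k\in\mathcal K_m$: for each base cell of scale $2^{-m}$ a fixed proportion (of order $K_0^{-1}$) of it lies simultaneously in $J_{k,K_0\mu}$ for \emph{every} $k\in\mathcal K_m$, which is exactly why the intervals of length $\log_2(\#A)$ appear in the definition of $\underline{\mathcal Z}(A)$.

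\emph{The lower bound.} Apply $T$, pass to the local-means characterisation as in the proof of Proposition~\ref{prop main}, restrict to $[-1,K_0]$, use H\"older with $p\ge q$, and apply Khinchine's inequality to isolate the diagonal frequency. For a fixed $m\in\mathfrak B$, a detector scale $k\in\mathcal K_m$ and $x$ in the all-active part of a base cell, Proposition~\ref{inner product lower bound} (for $K_0$ large) bounds $2^{k}\langle\Upsilon_k,\psi_{k,K_0\mu}\rangle$ from below and \eqref{convol big} gives $|\phi_k*\psi_{k,K_0\mu}(x)|\gtrsim c_0$; the contributions of the translates $\mu'\neq\mu$ are absorbed by Proposition~\ref{convolution bound}, and those of the other stacked scales $k'\in\mathcal K_m$, $k'\neq k$, by the geometric decay in $|k-k'|$ of the pairings $\langle\eta_{k',\cdot},\psi_{k,\cdot}\rangle$ and of $\phi_k*\psi_{k',\cdot}$ — this decay comes from the moment cancellations \eqref{moment cancel test func} and \eqref{triebel lizorkin moment cancel} when $k'>k$, from the wavelet cancellation \ref{prop moment cancel} when $k'<k$, together with the exponential decay \ref{prop exp decay} and Lemma~\ref{lemma wavelet prop}\,\ref{coeff decay}. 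Consequently $\sum_{k\in\mathcal K_m}2^{ksq}|\phi_k*Tg(x)|^{q}$ is $\gtrsim\#\mathcal K_m\approx\underline{\mathcal Z}(A)$ times the single-scale size on the all-active part of each cell, so the $\ell^{q}$ square function contributes a factor $\underline{\mathcal Z}(A)^{1/q}$ there; integrating over the cells, summing over the $\approx R\approx\log_2(\#A)$ blocks (the outer $L^{p/q}$-summation over the essentially disjoint blocks producing, since $p>q$, the extra factor $R^{1-1/q}$ relative to the normalisation $\beta$), and invoking \eqref{testbig} and \eqref{set freq}, yields $\|Tg\|_{\fspq}\gtrsim(\log_2\#A)^{1-1/q}\underline{\mathcal Z}(A)^{1/q}$. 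Choosing $E^{\pm}$ as in \eqref{plus minus sets} for a good realisation of the signs gives $\mathcal G(F^{-n-1/q'}_{p,q},A)\gtrsim(\log_2\#A)^{1-1/q}\underline{\mathcal Z}(A)^{1/q}$, and assertion~\eqref{main thm end point quant, part 1} follows by duality.

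\emph{Main obstacle.} The delicate part is the joint bookkeeping of the interactions once $\approx\log_2(\#A)$ wavelet scales are stacked over shared cells: every $\psi_{k,K_0\mu}$ now meets bumps at all of these scales and every $\phi_k$ meets wavelets \emph{and} bumps at all of them, so one must check that the cross-frequency pairings decay geometrically in the scale difference at a rate \emph{independent of} $\log_2(\#A)$ (this is precisely what the moment conditions \ref{prop moment cancel}, \eqref{moment cancel test func}, \eqref{triebel lizorkin moment cancel} buy, with \ref{prop exp decay} compensating for the non-compact support), and that the $O(\log_2\#A)$ same-scale neighbour interactions from Propositions~\ref{inner product lower bound} and~\ref{convolution bound} remain absorbed by a single $\#A$-independent choice of $K_0$ — which they are, being geometric in $K_0|\mu-\mu'|$. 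The second, combinatorial, difficulty is to arrange $\mathfrak B$, the $\mathcal K_m$, and $\beta$ so that the number of active detector scales per cell is $\gtrsim\underline{\mathcal Z}(A)$, the number of spatially disjoint blocks is $\gtrsim\log_2(\#A)$, and $\|g\|_{\fspq}\lesssim 1$ hold at once; since the stacking enters the numerator at wavelet scale $2^{-k}$ but the denominator only at the much finer bump scale, these constraints do not simply cancel, and balancing them is what pins down the exponents $(1-1/q,1/q)$ in the statement.
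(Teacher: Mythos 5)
Your proposal does not follow the paper's proof, and the route you sketch contains a genuine gap at its central step. You try to manufacture the factor $\underline{\mathcal Z}(A)^{1/q}$ by stacking $\approx\underline{\mathcal Z}(A)$ \emph{detector} scales $k\in\mathcal K_m$ whose intervals $J_{k,K_0\mu}$ are supposed to be simultaneously active on a fixed proportion $\gtrsim K_0^{-1}$ of each base cell. That claim is false: for a fixed $k$ the active set $\bigcup_\mu J_{k,K_0\mu}$ is $2^{-k}K_0$-periodic with density $\approx|J|/K_0$, and for $R$ distinct scales $k_1<\dots<k_R$ the finer sets look essentially uniform inside a period of the coarser ones, so the intersection has density $\approx(|J|/K_0)^R$. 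With $R\approx\log_2(\#A)$ this is $\#A^{-c}$-small, which kills the lower bound. Nothing in the positions $2^{-k}K_0\ZZ$ forces the $J_{k,K_0\mu}$ to nest across scales (indeed $J\subset[1/4,3/4]$ rules out nesting outright), and there is no choice of $K_0$ that rescues this.

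What the paper actually does is orthogonal to your square-function stacking. The gain $N^{1-1/q}$ with $N\approx\log_2(\#A)$ is put into the \emph{test function}, not the detector: in \eqref{eq test end point} the block function $H_\kappa$ superposes $N$ bump families at the $N$ consecutive \emph{bump} scales $2^{b_\kappa+\tau}$, $\tau=0,\dots,N-1$, all placed at the \emph{same} spatial positions $2^{N+2-b_\kappa}K_0\rho$, with amplitude $2^{(\tau-N)(n+1)}$ tuned so that each $\tau$ contributes equally to $2^k\langle H_\kappa,\psi_{k,K_0\mu}\rangle$; this produces the factor $N$ in Proposition~\ref{prop end point main} while Proposition~\ref{prop tech}\,\ref{prop tech 2} applied block-by-block ($m=N+\tau$) only charges $\|f_t\|_{\fspqq}\lesssim N^{1/q}$ for that stacking (Lemma~\ref{lemma end point test bound}). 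The ratio $N/N^{1/q}=N^{1-1/q}$ is where $\log_2(\#A)^{1-1/q}$ comes from. The factor $Z^{1/q}$ then falls out of the outer $\ell^q$-sum over $k\in\mathfrak A(\kappa)$ after the H\"older step $\|\cdot\|_{L^p}\gtrsim\|\cdot\|_{L^q([-1,K_0])}$ and the $L^q$--$\ell^q$ interchange; at that point each detector scale $k$ is integrated \emph{separately}, so the lower bound never needs two scales to be active at the same $x$, and the objection above does not arise. Your ``$R^{1-1/q}$ from the outer $L^{p/q}$ summation over blocks'' also does not hold: with $\approx R$ disjoint blocks contributing equally to both $\|g\|$ and $\|Tg\|$, the $R^{1/p}$ factors cancel in the ratio, and in any case your $\Upsilon_k$'s all live on $[0,K_0]$, so the asserted spatial disjointness of blocks is not achieved by the construction as written. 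To repair the argument you would need to import the paper's key device: stack the \emph{bump} scales (not the detector scales) inside each block with the geometric amplitude $2^{(\tau-N)(n+1)}$ so that the inner products, rather than the square function, accumulate the factor $N$.
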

We can re-frame the above in terms of the lower wavelet projection numbers. 
\begin{cor}
For $\Lambda\geq 4$, we have
\begin{enumerate}
    \item For $1<p<q<\infty$, \[\gamma_*(F^{n+1/q}_{p,q},\Lambda)\gtrsim_{p,q} \log_2(\Lambda)^{1/q}.\]
    \item For $1<q<p<\infty$, \[\gamma_*(F^{-n-1/q'}_{p,q},\Lambda)\gtrsim_{p,q} \log_2(\Lambda)^{1-1/q}.\]
\end{enumerate}
\end{cor}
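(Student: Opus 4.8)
The plan is to deduce the Corollary directly from Theorem~\ref{main thm quant end point} by choosing, for a given $\Lambda$, a suitable set $A$ of wavelet frequencies realizing the bound in the definition \eqref{eq lower proj numbers} of $\gamma_*$. Recall that $\gamma_*(\fspq,\Lambda)=\inf\{\mathcal{G}(\fspq,A):\#A\geq \Lambda\}$, so it suffices to show that \emph{every} $A$ with $\#A\geq\Lambda$ satisfies $\mathcal{G}(F^{n+1/q}_{p,q},A)\gtrsim_{p,q}\log_2(\Lambda)^{1/q}$ in case (1), and the analogous bound in case (2). The natural move is to pick, inside an arbitrary such $A$, a subset of the form $\{2^k:k\geq 0\}\cap A$ that is an interval of consecutive powers — but this is exactly where one must be slightly careful, since a general $A$ need not contain a long block of consecutive frequencies.

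First I would observe that the quantity $\underline{\mathcal{Z}}(A)$ in Theorem~\ref{main thm quant end point} is always $\geq 1$ (the minimum is over $m$ with $2^m\in A$, and $k=m$ itself is counted). Hence from part (1) of Theorem~\ref{main thm quant end point} applied to a set $A'$ of the special form $\{2^n:n\geq 0\}$ truncated to have cardinality comparable to $\Lambda$, we immediately get $\mathcal{G}(F^{n+1/q}_{p,q},A')\gtrsim \log_2(\#A')^{1/q}\cdot 1 = \log_2(\Lambda)^{1/q}$, and symmetrically $\mathcal{G}(F^{-n-1/q'}_{p,q},A')\gtrsim \log_2(\Lambda)^{1-1/q}$ in case (2). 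This handles the "sup over $A$" direction trivially, but for the \emph{infimum} defining $\gamma_*$ we need a lower bound valid for all competing $A$ of size $\geq\Lambda$. Since $\mathcal{G}(\fspq,A)$ is monotone in $A$ (enlarging $A$ only enlarges the sup over $E$ with $SF(E)\subset A$), it is enough to produce, inside any $A$ with $\#A\geq\Lambda$, a subset to which Theorem~\ref{main thm quant end point} applies with the right parameters. Concretely: given $A$ with $\#A\geq\Lambda$, I would let $A_0\subset A$ consist of, say, the $\lfloor\Lambda/2\rfloor$ smallest elements of $A$; then $\#A_0\asymp\Lambda$, and although $A_0$ may be sparse, the proof of Theorem~\ref{main thm quant end point} (which is carried out precisely for $A=\{2^n:n\geq0\}$) can be rerun verbatim on the \emph{dilated} index set — equivalently one applies the theorem to the frequency set $\{2^n:0\le n<\#A_0\}$ and then transfers the resulting projection back into $A$ via the monotonicity, noting $SF$ is preserved.

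The main obstacle, and the step I would spend the most care on, is making the transfer in the previous paragraph rigorous: Theorem~\ref{main thm quant end point} is stated only for the specific set $A=\{2^n:n\ge0\}$ of \emph{all} consecutive frequencies, and to feed it into the infimum over all $A$ with $\#A\ge\Lambda$ one must check that the construction of the test function and the projection operators in Sections~\ref{sec test func}--\ref{sec end point} only ever uses finitely many frequencies from a consecutive block, so that one may work with the block $\{0,1,\dots,L-1\}$ for $L\asymp\Lambda$ and then embed it inside the given $A$. Once that is granted, the estimate $\log_2(L)\asymp\log_2(\Lambda)$ and the bound $\underline{\mathcal{Z}}\geq1$ finish case (1); case (2) follows identically, using part (2) of Theorem~\ref{main thm quant end point}, or alternatively by the duality argument of Remark~\ref{remark duality} exactly as in the non-endpoint case. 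I would also remark that the hypothesis $\Lambda\geq 4$ is exactly what is needed to ensure $\log_2(\Lambda)\geq 2$ so that the "large enough cardinality" requirement in Theorem~\ref{main thm quant end point} is met after passing to $A_0$.
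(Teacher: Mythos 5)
The key observation you make — that $\underline{\mathcal{Z}}(A)\geq 1$ always (take $k=m$) — is exactly the crux, and together with $\log_2(\#A)\geq\log_2(\Lambda)$ it reduces the Corollary to a one-line consequence of Theorem~\ref{main thm quant end point}. But you then build an elaborate and, as stated, incorrect detour around a misreading of that theorem's hypothesis.

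Despite the paper's awkward phrasing ``Let $A=\{2^n:n\geq 0\}$ be a set of large enough cardinality,'' the theorem is meant to hold (and its proof is carried out) for an \emph{arbitrary} finite subset $A\subset\{2^n:n\geq 0\}$ of sufficiently large cardinality. This is visible immediately after the theorem: the proof fixes $N$ with $8^{N-1}\leq\#A\leq 8^N$ and uses $\underline{\mathcal{Z}}(A)$ to extract intervals $I_i$ around midpoints $n_i\in\log_2(A)$, none of which makes sense if $A$ is literally all of $\{2^n:n\ge 0\}$ or only a consecutive block. Consequently, given any competing $A$ with $\#A\geq\Lambda$ in the infimum defining $\gamma_*$, you may apply Theorem~\ref{main thm quant end point} \emph{to $A$ itself} and conclude $\mathcal{G}(F^{-n-1/q'}_{p,q},A)\gtrsim\log_2(\#A)^{1-1/q}\,\underline{\mathcal{Z}}(A)^{1/q}\geq\log_2(\Lambda)^{1-1/q}$; taking the infimum gives part (2), and part (1) is identical (or follows by duality). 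No passage to a subset $A_0$ or to a consecutive block is needed.

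The step in your proposal that genuinely fails is the ``transfer.'' You propose applying the theorem to the consecutive block $\{2^n:0\le n<\#A_0\}$ and then ``transferring the resulting projection back into $A$ via the monotonicity, noting $SF$ is preserved.'' Monotonicity of $\mathcal{G}$ runs the wrong way for this: it tells you $\mathcal{G}(\fspq,A_0)\leq\mathcal{G}(\fspq,A)$ when $A_0\subset A$, but the consecutive block $\{2^n:0\le n<\#A_0\}$ is in general \emph{not} a subset of your given $A$ (which may be sparse). Projections $P_E$ with $SF(E)$ inside the consecutive block have no reason to have $SF(E)\subset A$, so their norms are simply not competitors in the supremum defining $\mathcal{G}(\fspq,A)$. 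If the theorem genuinely only applied to consecutive blocks, the Corollary would not follow by this route; the reason the Corollary is true is precisely that the theorem already covers sparse $A$, with the density enters through $\underline{\mathcal{Z}}(A)$, which you correctly bound below by $1$.
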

By Remark \ref{remark duality}, it suffices in this case as well to prove only the second assertion of  Theorem \ref{main thm quant end point}. Let $N$ be such that $4^N\leq 8^{N-1}\leq \#A\leq 8^N$. Using the definition of $\underline{\mathcal{Z}}(A)$, we can find $M_N$ disjoint intervals $I_i=(n_i-3N,n_i+3N)$ with midpoints $n_i\in \log_2(A)$ ($1\leq i\leq M_N$) and $M_N\geq 8^{N-1}/6N\geq 4^N$, such that each $I_i$ contains at least $\underline{\mathcal{Z}}(A)$ points in $\log_2(A)$. By a pigeonholing argument, each $I_i$ contains a subinterval $\Tilde{I_i}$ of length $N$ with at least $\underline{\mathcal{Z}}(A)/6$ points in $\log_2(A)$. The upshot is that we have essentially reduced our problem to proving the following:
\begin{thm}
\label{final thm}
Let $\#A\geq 4^N$. Suppose there exist $4^N$ disjoint intervals $I_{\kappa} (1\leq\kappa\leq 4^N)$, each of length $N$, with $I_{\kappa}\cap\log_2(A)\neq \emptyset$. Let
\begin{equation}
    Z=\frac{1}{4^N}\sum_{\kappa=1}^{4^N}\#(I_{\kappa}\cap A).
\end{equation}
Then, for $q\leq p<\infty$, we have that
\begin{equation}
\label{eq end point growth}
\mathcal{G}(F^{-1/q'}_{p,q};A)\geq c(p,q)N^{1-1/q}Z^{1/q}.    
\end{equation}
\end{thm}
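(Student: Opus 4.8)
The plan is to run the probabilistic scheme from Section \ref{sec lower bounds}, but now iterated over the $4^N$ disjoint blocks of scales supplied by the hypothesis, so that the contributions of the blocks add in the appropriate $\ell^q$ fashion and produce the extra factor $N^{1-1/q}$. Concretely: for each $\kappa$ pick a scale $k_\kappa \in I_\kappa \cap \log_2(A)$, and within each block select a family of roughly $Z$ scales $\{2^k : k \in I_\kappa \cap \log_2 A\}$ to randomize over. Build a test function of the form $f_t = \sum_\kappa \sum_{2^k \in I_\kappa \cap A} r_k(t) 2^{-ks} \Upsilon_k$ (normalized by a power of $N$ and $Z$ so that Lemma \ref{test norm}, via Proposition \ref{prop tech}\ref{prop tech 2}, gives $\|f_t\|_{\fspq} \lesssim 1$; here $s = -1/q'-n$ and $1/q \le 1/p$, so part \ref{prop tech 2} applies and yields the counting bound $\big(\sum \beta_m^q 2^{-m}\#\mathfrak{L}^m\big)^{1/q}$, which one arranges to be $O(1)$). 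The spatial supports: since the blocks are disjoint in scale but the functions all live on a bounded interval, the translates $\eta_{k,K_0\mu}$ at different scales overlap spatially, so one cannot separate blocks geometrically — the separation must come entirely from Khinchine/orthogonality in the randomization.

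Next I would reproduce the lower-bound computation of Proposition \ref{prop main}. Expanding $\phi_k * T_{t_1}f_{t_2}(x)$, applying Khinchine's inequality in the $t_1,t_2$ variables, and keeping only the diagonal terms $j=k$, $l=k$ (all legitimate because the $r_k$ are jointly independent across all chosen scales), one gets
\[
\Big(\int_0^1\!\!\int_0^1 |\phi_k * T_{t_1}f_{t_2}(x)|^q \Big)^{1/q} \gtrsim (\text{norm. const.})\, 2^{-ks}\, \Big| \sum_{\mu} 2^k\langle \Upsilon_k, \kw\rangle \phi_k * \kw(x)\Big|,
\]
and then Propositions \ref{inner product lower bound}, \ref{convolution bound} and estimate \eqref{convol big} give, for $x \in J_{k,K_0\mu}$, a lower bound of size $\gtrsim c_0\, 2^{N(-s+1/q-n-1)}$ (times the fixed integral $\int_0^{1/2}\eta(y)y\,dy$), exactly as before, provided $K_0$ is chosen large. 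Summing the $L^q([-1,K_0])$ norms over all chosen scales $k$ (there are $\approx 4^N Z$ of them, and $\sum_\mu |J_{k,K_0\mu}| \gtrsim 1$ for each), and combining with $2^{ksq}$ and the normalization, yields $\big(\int\!\int \|T_{t_1}f_{t_2}\|_{\fspq}^q\big)^{1/q} \gtrsim (\text{norm. const.}) (4^N Z)^{1/q} \cdot 2^{-N/q}$. Balancing the normalization from $\|f_t\|\lesssim 1$, which costs a factor like $(4^N N Z)^{-1/q}$ or similar depending on how many scales land in $\mathfrak{L}^m$ for each dyadic block $m$, against this gain should leave $N^{1-1/q} Z^{1/q}$; one then passes from $T_{t_1} = P_{E^+} - P_{E^-}$ to a projection with $SF(E^\pm) \subset A$ as in the displayed argument after Proposition \ref{prop main}, concluding $\mathcal{G}(F^{-1/q'}_{p,q};A) \gtrsim N^{1-1/q} Z^{1/q}$.

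The main obstacle I expect is the bookkeeping of the two competing normalizations. In the single-block case of Section \ref{sec lower bounds} the $L^2$-normalization and the Triebel--Lizorkin bound both scale with a single parameter $N$, and the factors cancel cleanly; here there are two independent parameters, the number of blocks $4^N$ and the per-block density $Z$ (and also $N$ itself as the block length), entering $\|f_t\|_{\fspq}$ through the counting quantity $2^{-m}\#\mathfrak{L}^m$ in Proposition \ref{prop tech}\ref{prop tech 2} and entering the lower bound through the number of summed scales and the power $2^{-N/q}$. Getting the exponents to come out as precisely $N^{1-1/q}Z^{1/q}$ — rather than some other combination — requires choosing $\mathfrak{L}^m$ and the $\beta_m$ so that the atoms from distinct blocks are collected into a single value of $m$ (or a controlled number of them), and carefully tracking where the Rademacher-orthogonality "saves" a full power versus where Hölder on $L^{p/q}$ (using $p \ge q$) merely preserves it. A secondary technical point is checking that the diagonal restriction in Khinchine is valid: one needs the index sets $\{k : 2^k \in I_\kappa \cap A\}$ over all $\kappa$ to be genuinely disjoint (guaranteed by the disjointness of the $I_\kappa$), so that a single Rademacher system indexed by all selected scales governs both the $t_1$ and $t_2$ randomizations without collision.
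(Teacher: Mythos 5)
There is a genuine gap, and it is not merely bookkeeping: your construction cannot produce the factor $N^{1-1/q}$ at all. You attach an independent Rademacher sign $r_k(t)$ to every selected scale $2^k\in\bigcup_\kappa (I_\kappa\cap A)$, which exactly reproduces the non-endpoint test function with $A$ replaced by $\bigcup_\kappa(I_\kappa\cap A)$. If you track the two competing quantities, both scale the same way in the total scale count $\#\bigcup_\kappa(I_\kappa\cap A)\approx 4^NZ$: Proposition~\ref{prop tech}\ref{prop tech 2} gives $\|f_t\|_{\fspq}\lesssim(2^{-m}\#\mathfrak{L}^m)^{1/q}\approx(2^{-N}\cdot 4^NZ)^{1/q}$, while the diagonal Khinchine lower bound gives $\bigl(\sum_{k}\sum_\mu|J_{k,K_0\mu}|\bigr)^{1/q}\approx(4^NZ)^{1/q}$ times the fixed per-scale constant $2^{N(-s-1/q'-n)}$, which equals $1$ at the endpoint $s=-1/q'-n$. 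These powers of $4^NZ$ cancel, leaving $\mathcal{G}\gtrsim 1$. No choice of global normalization, of $\mathfrak{L}^m$, or of how you collect atoms into $m$-shells can break this dimensional balance as long as every scale carries its own sign, because the gain and the cost both come from the same count of independent atoms.

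The missing idea is \emph{coherent} (unrandomized) summation of scales within each block. The paper builds $H_\kappa$ as a deterministic sum over all $\tau=0,\dots,N-1$ of dilates $\eta(2^{b_\kappa+\tau}\cdot)$, with weights $2^{(\tau-N)(n+1)}$ chosen so that after the change of variables each $\tau$ contributes a term of \emph{equal size} $\approx 2^{(k-b_\kappa-N)(n+1)}$ to $2^k\langle H_\kappa,\psi_{k,K_0\mu}\rangle$; the coherent sum of $N$ equal terms yields the factor $N$ (not $N^{1/2}$ from a square function, not $N^{1/q}$). Only one Rademacher sign, $r_{b_\kappa+N}(t)$, is attached per block, so Khinchine in $(t_1,t_2)$ sees $4^N$ independent block-signs in $t_2$ and the scale-signs in $t_1$, and the diagonal restriction $j=k$, $\kappa'=\kappa$ retains the full factor $N$ from $\langle H_\kappa,\psi_{k,K_0\mu}\rangle$. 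Meanwhile, because the $\tau$-scales of a block live at distinct dyadic levels $\mathfrak{L}^{N+\tau}$, Proposition~\ref{prop tech}\ref{prop tech 2} only sees them in $\ell^q$ over $\tau$, giving $\|f_t\|\lesssim N^{1/q}$ (Lemma~\ref{lemma end point test bound}); the ratio $N/N^{1/q}=N^{1-1/q}$ is exactly the endpoint gain. Two further details your sketch does not account for: the $\tau$-sum in $H_\kappa$ runs over \emph{all} $\tau\in\{0,\dots,N-1\}$, not merely those with $b_\kappa+\tau\in\log_2 A$ (the test function is free to use scales outside $A$; only the projection is constrained), and the parity of $\eta$ is chosen relative to the signs of $A^n_{-1}$ and $A^n_0$ rather than fixed in advance, to avoid cancellation in the $\lambda=0$ term.
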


In order to show \eqref{eq end point growth} for the endpoint case, we need to construct a suitable family of test functions. To this effect, let $\eta$ denote a $C^{\infty}$ function supported in $(-2^{-5}, 2^{-5})$ satisfying the conditions \eqref{moment cancel test func} and \eqref{testbig}. However, the parity of $\eta$ would be decided later in the argument.

Let
\[\mathfrak{L}=\{b_{\kappa}+N :\kappa=1,2,\ldots,4^n\}\]
and for $\tau=0,1,\ldots,N-1$, let
\[\mathfrak{L}^{N+\tau}=\{b_{\kappa}+\tau :\kappa=1,2,\ldots,4^n\}.\]
Then $\mathfrak{L}^{N+\tau}$ are disjoints sets, each of cardinality $4^N$. Further, for $l\in \mathfrak{L}$, we define
\begin{equation}
\label{eq test end point}
H_{\kappa}(x)=\sum_{\tau=0}^{N-1}2^{(\tau-N)(n+1)}\sum_{\substack{\rho\in \NN:0<2^{N-b_{\kappa}+2}\rho<1}}\eta(2^{b_{\kappa}+\tau}(x-2^{N+2-b_{\kappa}}K_0\rho)).
\end{equation}
Finally, for $t\in [0,1]$, let
\begin{equation}
f_t(x)=\sum_{\kappa=1}^{4^N}r_{b_{\kappa}+N}(t)2^{(b_{\kappa}+N)/q'}H_{\kappa}(x),    
\end{equation}
where $r_j$ denotes the $jth$ Radamacher function with $j\in \NN$.

\begin{lem}
\label{lemma end point test bound}
We have \[\|f_t\|_{F^{-1/q'-n}_{p,q}}\leq C(p,q)N^{1/q}.\]
\end{lem}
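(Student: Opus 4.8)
The plan is to deduce this bound from the general atomic estimate in Proposition~\ref{prop tech}, exactly as Lemma~\ref{test norm} was deduced in the non-endpoint case, but now paying attention to the extra $\tau$-sum inside each $H_\kappa$. First I would rewrite $f_t$ in fully expanded form as a linear combination of dilated translates $\eta(2^{b_\kappa+\tau}(x-\cdot))$, collecting the total power of $2$ attached to each atom: the factor $2^{(b_\kappa+N)/q'}$ from $f_t$ times $2^{(\tau-N)(n+1)}$ from $H_\kappa$. The key bookkeeping observation is that an atom at scale $l=b_\kappa+\tau$ carries, after normalising to the Triebel--Lizorkin atomic scale $2^{-ls}$ with $s=-1/q'-n$, a coefficient of size $\lesssim 1$ uniformly in $\kappa,\tau,\rho$. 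Indeed $-ls = (b_\kappa+\tau)(1/q'+n)$, and one checks that $2^{(b_\kappa+N)/q'}2^{(\tau-N)(n+1)}2^{-(b_\kappa+\tau)(1/q'+n)}$ is bounded by an absolute constant (here the choice of the shifts $\tau-N$ and $b_\kappa+N$ is precisely engineered so the $b_\kappa$-dependence cancels and the residual powers of $2^N$ and $2^\tau$ combine to something $\lesssim 1$). So $f_t$ is, up to the uniform bounds $\sup|a_{l,\nu}|\le 1$, of exactly the form $g$ in Section~\ref{sec test func 1} with $m=N$.

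Next I would identify the index sets: take $\mathfrak L^N=\mathfrak L=\{b_\kappa+N\}$ and, for $0\le\tau\le N-1$, the family $\mathfrak L^{N+\tau}=\{b_\kappa+\tau\}$, noting these are the disjoint families already introduced before the lemma, each of cardinality $4^N$, and apply the second assertion of Proposition~\ref{prop tech}\,\ref{prop tech 2}, i.e. \eqref{eqn tech 2}. This requires: (a) $s>-n-2$, which holds since $s=-1/q'-n>-n-1$; (b) that the translates $2^{N+2-b_\kappa}K_0\rho$ appearing at a given scale $l=b_\kappa+\tau$ are $K_0 2^{m-l}$-separated with $m=N$ — the separation between consecutive admissible $\rho$ values is $2^{N+2-b_\kappa}K_0\ge 4K_0 2^{N-(b_\kappa+\tau)}=4K_0 2^{m-l}$, so this is satisfied (with room to spare); and (c) the moment conditions on $\eta$, which are assumed via \eqref{moment cancel test func}. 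Then \eqref{eqn tech 2} gives
\[
\|f_t\|_{F^{-1/q'-n}_{p,q}}\lesssim\Big(\sum_{m'\ge 1}|\beta_{m'}|^q\,2^{-m'}\#(\mathfrak L^{m'})\Big)^{1/q},
\]
where only $m'=N,N+1,\dots,2N-1$ contribute, each with $\#(\mathfrak L^{m'})=4^N$ and $|\beta_{m'}|\lesssim 1$; the $N$-fold sum $\sum_{\tau=0}^{N-1}2^{-(N+\tau)}4^N\lesssim N\cdot 2^{-N}4^N=N\,2^N$ then needs care. This is slightly off from the desired $N^{1/q}$: the resolution is that the correct normalisation puts $\beta_{m'}$-type weights of size $2^{(\tau-N)(n+1)}2^{\text{(something)}}$ so that $\sum 2^{-m'}\#(\mathfrak L^{m'})|\beta_{m'}|^q$ telescopes to $\lesssim N$, giving $N^{1/q}$.

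The main obstacle, and the step I would be most careful about, is exactly this last accounting: matching the explicit powers of $2$ in the definitions of $H_\kappa$ and $f_t$ against the atomic normalisation $2^{-ls}$ at scale $l$, so that after the dust settles each atom has coefficient $\lesssim 1$ and the geometric sum over $\tau$ (and over $m'$ in \eqref{eqn tech 2}) collapses to an absolute constant times $N$ rather than $N\,2^N$ or similar. Equivalently, one must verify that the doubly-indexed sequence $a_{l,\nu}$ extracted from $f_t$ genuinely satisfies $\sup_{l,\nu}|a_{l,\nu}|\le 1$ and that the "$\beta_m$" weights are $O(1)$; once that is pinned down the lemma is immediate from Proposition~\ref{prop tech}. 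I would also double-check that the support condition on $\eta$ ($(-2^{-5},2^{-5})$) together with the spacing $2^{N+2-b_\kappa}K_0$ keeps the translates at each fixed scale genuinely disjoint (not merely separated), so that the characteristic-function bound in Proposition~\ref{prop tech}\,\ref{prop tech 1} is sharp and \eqref{eqn tech 2} applies cleanly.
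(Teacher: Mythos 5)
Your overall strategy is the same as the paper's: group the atoms in $f_t$ by $\tau$, recognize the families $\mathfrak{L}^{N+\tau}=\{b_\kappa+\tau\}$ as pairwise disjoint, verify the $K_0 2^{m-l}$ separation with $m=N+\tau$, and invoke Proposition~\ref{prop tech}\,\ref{prop tech 2}. So the route is right. But the proof is not actually completed, because the one non-trivial step --- determining the weights $\beta_{N+\tau}$ --- is left unresolved, and the guess you offer for them is incorrect.

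To be precise: you first try absorbing all of the normalisation into the atom coefficients $a_{l,\nu}$ and declaring $|\beta_{m'}|\lesssim 1$. As you yourself notice, this produces $\sum_{\tau=0}^{N-1}2^{-(N+\tau)}4^N\approx 2^N$, i.e.\ a bound $2^{N/q}$ rather than $N^{1/q}$, so this accounting cannot be what is intended. You then say the fix is to put ``$\beta_{m'}$-type weights of size $2^{(\tau-N)(n+1)}2^{\text{(something)}}$'' and assert the sum ``telescopes to $\lesssim N$''. This is both vague and wrong in its leading factor: the $2^{(\tau-N)(n+1)}$ in $H_\kappa$ is exactly what compensates the atomic normalisation $2^{-ls}=2^{(b_\kappa+\tau)(1/q'+n)}$ across scales and is \emph{not} the $\beta$-weight. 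The correct choice, which one finds by writing $f_t=\sum_\tau 2^{(\tau-N)/q}g_{\tau,t}$ with $g_{\tau,t}$ the properly $2^{-ls}$-normalised piece at scales $\mathfrak{L}^{N+\tau}$, is $\beta_{N+\tau}=2^{(\tau-N)/q}$. Then
\[
\sum_{\tau=0}^{N-1}|\beta_{N+\tau}|^q\,2^{-(N+\tau)}\#\mathfrak{L}^{N+\tau}
=\sum_{\tau=0}^{N-1}2^{\tau-N}\,2^{-N-\tau}\,4^N
=\sum_{\tau=0}^{N-1}1=N,
\]
which is exactly the $N^{1/q}$ bound. This computation is the entire content of the lemma, so leaving it unfinished is a genuine gap.

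Two smaller remarks. First, your claim that after normalising by $2^{-ls}$ ``the $b_\kappa$-dependence cancels'' does not actually hold with the displayed exponent $(b_\kappa+N)/q'$ in \eqref{eq test end point}; with that exponent the residual factor is $2^{-(N-\tau)/q-n(N+b_\kappa)}$, which happens to be $\le 1$ but carries a $b_\kappa$-dependent decay. For the cancellation you describe (and for consistency with the factor $2^{(b_{\kappa'}+N)(n+1/q')}$ that actually appears in the lower-bound computation of Proposition~\ref{prop end point main}), the exponent in the definition of $f_t$ should be read as $(b_\kappa+N)(n+1/q')$. Second, the identification ``$\mathfrak{L}^N=\mathfrak{L}=\{b_\kappa+N\}$'' conflicts with your own definition $\mathfrak{L}^{N+\tau}=\{b_\kappa+\tau\}$, which at $\tau=0$ gives $\mathfrak{L}^N=\{b_\kappa\}$; the set $\mathfrak{L}=\{b_\kappa+N\}$ is just auxiliary notation and is not one of the $\mathfrak{L}^m$'s fed into Proposition~\ref{prop tech}.
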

\begin{proof}
Define $g_{\tau,t}$ to be
\[g_{\tau,t}=\sum_{l\in \mathfrak{L}^{N+\tau}}2^{l(n+1/q')}\sum_{\tau=0}^{N-1}2^{(\tau-N)(n+1)}\sum_{\substack{\rho\in \NN:\\0<2^{N+\tau+2-l}\rho<1}}r_{l+N-\tau}(t)\eta(2^l(x-2^{N+\tau+2-l}K_0\rho)).\]
Then we can write
\[f_t=\sum_{\tau=0}^{N-1}2^{(\tau-N)/q}g_{\tau,t}.\]
This sets the stage to apply Proposition \ref{prop tech} with $m=N+\tau$. It is clear that the points $2^{N+\tau+2-l}K_0\rho$ are $K_02^{m-l}$ separated. Using \eqref{eqn tech 2} with $\beta_{N+\tau}=2^{(\tau-N)/q}$, we get
\[\|f_t\|_{F^{-1/q'}_{p,q}}\lesssim_{p,q}(\sum_{\tau=0}^{N-1}((2^{(\tau-N)/q})^q2^{-\tau-N}\#(\mathfrak{L}^{N+\tau})))^{1/q}\lesssim N^{1/q}.\]
\end{proof}
For $\kappa=1,2,\ldots, 4^N$, we define
\begin{equation}
    \mathfrak{A}(\kappa)=I_{\kappa}\cap\log_2(A),
\end{equation}
\begin{equation}
\label{eq end point pts}
    \mathfrak{P}(\kappa)=\{(j,\mu):j\in\mathfrak{A}(\kappa),\mu\in 2^{j-b_{\kappa}+N+2}\ZZ,1\leq\mu<2^j \}.
\end{equation}
and
\begin{equation}
    \mathfrak{P}=\cup_{\kappa=1}^{4^N}\mathfrak{P}(\kappa).
\end{equation}
For $t\in [0,1]$, we also define
\[T_tf(x)=\sum_{(j,\mu)\in \mathfrak{P}}r_j(t)2^j\langle f,\psi_{j,K_0\mu}\rangle\psi_{j,K_0\mu}(x).\]

\begin{prop}
\label{prop end point main}
For $q<p<\infty$, there exists $c(p,q)>0$ such that for $K_0$ and $N$ large enough, we have
\begin{equation}
\bigg(\int_{0}^{1}\int_{0}^{1}\|T_{t_1}f_{t_2}\|^q_{\fspqq}\,dt_1\,dt_2\bigg)^{1/q}\geq c(p,q)NZ^{1/q}.
\end{equation}
\end{prop}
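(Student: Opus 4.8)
The plan is to mimic the structure of Proposition \ref{prop main}, but to exploit the extra $\tau$-summation in $H_\kappa$ so that each dyadic frequency block $l=b_\kappa+\tau$ contributes a full factor rather than an $l^q$-averaged one. First I would use the local means characterization \eqref{local means char} with $\phi$ supported in $(-2^{-4},2^{-4})$ and enough vanishing moments, restrict the $L^p$-norm to the bounded set where the test functions live, and apply H\"older's inequality (using $p\geq q$) to pass to an $L^q$-norm in space. This turns $\|T_{t_1}f_{t_2}\|_{\fspqq}^q$ into a sum over the frequencies $k\in\log_2(A)$ of $2^{-k(n+1/q')q}\|\phi_k*T_{t_1}f_{t_2}\|_{L^q}^q$ (on the relevant bounded interval). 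Then for each such $k$ I apply Khinchine's inequality in both $t_1$ and $t_2$ to replace the Rademacher sums by the square function, and retain only the diagonal terms: the index $j=k$ in $T_{t_1}$ and the index $l=k$ inside the relevant $H_\kappa$ (where $k=b_\kappa+\tau$ for the unique $\kappa$ with $k\in I_\kappa$ and the unique $\tau$).

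Second, the diagonal term for a fixed $k=b_\kappa+\tau$ involves $2^k\langle H_\kappa,\psi_{k,K_0\mu}\rangle\,\phi_k*\psi_{k,K_0\mu}(x)$ summed over the admissible $\mu$. Here the key computation is an endpoint analogue of Proposition \ref{inner product lower bound}: only the $\tau$-term of $H_\kappa$ that lives exactly at scale $2^{-k}$ interacts non-negligibly with $\psi_{k,K_0\mu}$ (the other $\tau'$ give either high-frequency mismatch, killed by moment cancellation \eqref{moment cancel test func} as in Lemma \ref{lemma pou lower bound}, or low-frequency contributions controlled via the primitives $\Phi_j$ and the exponential decay of $\psi$), and the spatial separation by the factor $2^{N+2-b_\kappa}K_0$ together with Lemma \ref{lemma wavelet prop}\ref{coeff decay} ensures $\mu'\neq\mu$ terms are exponentially small once $K_0$ is large. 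This yields $|2^k\langle H_\kappa,\psi_{k,K_0\mu}\rangle|\gtrsim 2^{-(n+1)N}2^{(\tau-N)(n+1)}$ — wait, more precisely the normalization $2^{(\tau-N)(n+1)}$ in \eqref{eq test end point} combines with the $2^{-(n+1)(b_\kappa+\tau)}=2^{-(n+1)k}$ from the scaling of the inner product so that, after multiplying by $2^{-k(n+1/q')}$ and $2^{(b_\kappa+N)/q'}$ from $f_t$, the contribution of each diagonal $(\kappa,\tau)$ is comparable across all $\tau$. Combined with $|\phi_k*\psi_{k,K_0\mu}(x)|\geq c_0$ on $J_{k,K_0\mu}$ from \eqref{convol big} and Proposition \ref{convolution bound} for the off-diagonal $\mu'$, the space-$L^q$ norm at frequency $k$ is bounded below by a constant times (number of admissible $\mu$ at that scale)$^{1/q}\cdot 2^{-k/q}$, which is $\gtrsim$ a constant independent of $k$.

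Third, summing over all admissible $k$: for each $\kappa$ there are $\#\mathfrak A(\kappa)=\#(I_\kappa\cap\log_2(A))$ choices of $\tau$ giving a valid frequency $k\in I_\kappa$, and $\#\mathfrak P(\kappa)$ counts the $(j,\mu)$ pairs. After the dust settles, $\big(\int\int\|T_{t_1}f_{t_2}\|^q\,dt_1dt_2\big)^{1/q}\gtrsim \big(\sum_{\kappa}\sum_{\tau\,:\,b_\kappa+\tau\in\log_2(A)} 1\big)^{1/q}$, but the crucial point producing the factor $N$ rather than $N^{1/q}$ is that the $\tau$-sum inside each $H_\kappa$ is \emph{coherent} — the same Rademacher function $r_{b_\kappa+N}(t)$ multiplies the whole $H_\kappa$, so the $N$ scales in a single $\kappa$ do not average but \emph{add} in the square-function lower bound before the $L^q$ integration in space, i.e. one gets $N$ out front from the clustering of $N$ frequencies within each interval $I_\kappa$, and then $\big(\sum_\kappa \#\mathfrak A(\kappa)\big)^{1/q}\approx (4^N Z)^{1/q}$ from the remaining genuinely-independent count. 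Tracking the $4^N$ normalizations in $f_t$ (the $2^{(b_\kappa+N)/q'}$ weights) and in the final spatial sum ($\sum_\mu |J_{k,K_0\mu}|\approx 1$ per block since there are $\approx 2^{b_\kappa-N}$ intervals each of length $\approx 2^{-k}\approx 2^{-b_\kappa}$) yields precisely $NZ^{1/q}$, up to $c(p,q)$.

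\textbf{Main obstacle.} The delicate point is the bookkeeping that distinguishes the coherent factor $N$ (from the $N$ scales bundled under one Rademacher sign inside $H_\kappa$, whose contributions to $\phi_k*T_{t_1}f_{t_2}$ at the single output frequency $k$ must be shown to \emph{reinforce} rather than cancel) from the incoherent factor $Z^{1/q}$ (from the genuinely random $\kappa$-sum). Getting the reinforcement requires that for the output frequency $k=b_\kappa+\tau$, the only $H_{\kappa'}$ contributing is $\kappa'=\kappa$ and within it the only scale contributing is exactly $l=k$ — so the "$N$" does not actually come from many scales hitting one $k$, but rather from the fact that across the $N$ distinct output frequencies in $I_\kappa$ the \emph{same} sign $r_{b_\kappa+N}$ appears, so after Khinchine in $t_1$ these $N$ terms are grouped (not independent) and their squares sum to give an honest $N$ inside the $\ell^2$, which then survives the $2^{ksq}$-weighted $\ell^q$ sum as $N^{1}$; simultaneously, one must verify the off-diagonal $j\neq k$ and $\tau'\neq\tau$ cross terms, and the tails from exponential (not compact) decay, are summably small so the diagonal lower bound is not destroyed. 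I expect verifying this coherence/incoherence split carefully — in particular that the $\tau$-weights $2^{(\tau-N)(n+1)}$ are exactly calibrated so every scale contributes equally after normalization — to be the technical heart of the argument.
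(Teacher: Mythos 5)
There is a genuine gap at the heart of your argument. You claim that for a fixed output frequency $k$, ``only the $\tau$-term of $H_\kappa$ that lives exactly at scale $2^{-k}$ interacts non-negligibly with $\psi_{k,K_0\mu}$,'' and you then try to recover the factor $N$ from the shared Rademacher sign across the $N$ output frequencies in $I_\kappa$. Both halves of this are wrong, and the second cannot compensate for the first. In the actual computation, \emph{every} scale $\tau=0,\ldots,N-1$ of $H_\kappa$ contributes to the single inner product $\langle H_\kappa,\psi_{k,K_0\mu}\rangle$: the moment cancellation \eqref{moment cancel test func} does not annihilate the interaction of $\eta(2^{b_\kappa+\tau}(\cdot-x_0))$ with $\psi_{k,K_0\mu}$; it merely reduces it to the jump of the leading spline coefficient, which is of size $\approx 2^{-(b_\kappa+\tau-k)(n+1)}$. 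The prefactor $2^{(\tau-N)(n+1)}$ in \eqref{eq test end point} is calibrated exactly to cancel this $\tau$-dependence, so each $\tau$ contributes the same amount $2^{(k-b_\kappa-N)(n+1)}$ and the $\tau$-sum yields $2^k\langle H_\kappa,\psi_{k,K_0\mu}\rangle\approx N\,2^{(k-b_\kappa-N)(n+1)}$. That single factor of $N$ in the inner product is what survives, after Khinchine and the weighted spatial $L^q$ norm, as $N$ rather than $N^{1/q}$. If, as you assert, only the matching $\tau$ contributed, the inner product would carry no $N$ at all and the final count would collapse to $\bigl(\sum_\kappa\sum_{k\in\mathfrak{A}(\kappa)}2^{-2N}\bigr)^{1/q}= Z^{1/q}$, which is far too weak.

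Your proposed rescue --- that the $N$ output frequencies in $I_\kappa$ share the sign $r_{b_\kappa+N}(t_2)$ and therefore ``add'' --- does not work: those frequencies appear as \emph{separate} terms in the outer sum $\sum_k 2^{ksq}\|\phi_k*\cdot\|_{L^q}^q$ of the Triebel--Lizorkin norm and never interfere with one another; there are at most $N$ of them per $\kappa$ and they are already accounted for by $Z$, so they can contribute at most $(\#\mathfrak{A}(\kappa))^{1/q}\leq N^{1/q}$, never $N$. The shared sign \emph{is} essential, but for a different reason: it keeps the $\tau$-sum inside each inner product deterministic (coherent), so that $N$ equal terms add to $N$ rather than averaging to $\sqrt{N}$ under an independent randomization. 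Notably, your closing remark --- that the weights $2^{(\tau-N)(n+1)}$ are ``exactly calibrated so every scale contributes equally after normalization'' --- is the correct mechanism and contradicts the rest of your argument; once you adopt it consistently, the remaining structure of your proposal (restriction to $[-1,K_0]$, H\"older with $p\geq q$, Khinchine, retaining the diagonal $j=k$, $\kappa'=\kappa$, the analogue of Proposition \ref{inner product lower bound}, and Proposition \ref{convolution bound} for the off-diagonal $\mu'$) matches the paper's proof. A minor further bookkeeping slip: the measure of $\bigcup_m J_{k,\mu_m}$ is $\approx 2^{b_\kappa-N-k}$, not $\approx 1$ per block as you state.
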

\begin{proof}
As in the non-endpoint case, it suffices to show that
\begin{equation}
\bigg(\int_{0}^{1}\int_{0}^{1}\bigg\|\bigg(\sum_{\kappa}\sum_{k\in\mathfrak{A}(k)}2^{kq(-n-1/q')}|\phi_k*T_{t_1}f_{t_2}|^q\bigg)^{1/q}\bigg\|^q_{L^q([-1,K_0])}\,dt_1\,dt_2\bigg)^{1/q}\geq c(p,q)NZ^{1/q}.
\end{equation}
Interchanging integrals and applying Khinchine's inequality, we see that the above follows if we show
\begin{align*}
&\bigg(\sum_{\kappa}\sum_{k\in\mathfrak{A}(k)}2^{kq(-n-1/q')}\bigg\|\bigg(\sum_{j}\sum_{\kappa'}\bigg|\sum_{\mu:(j,\mu)\in\mathfrak{P}}2^j\langle2^{(b_{\kappa'}+N)(n+1/q')}H_{\kappa'},\psi_{j,K_0\mu}\rangle\\
&\phi_k*\psi_{j,K_0\mu}\bigg|^2\bigg)^{1/2}\bigg\|^q_{L^q([-1,K_0])}\bigg)^{1/q}\geq c(p,q)NZ^{1/q}.
\end{align*}
For the two inner summations, we only consider terms with $j=k$ and $\kappa' =\kappa$. Then the left hand side of the above expression is bounded below by
\begin{equation}
\label{eq carry forward}
\bigg(\sum_{\kappa}\sum_{k\in\mathfrak{A}(k)}2^{kq(-n-1/q')}\bigg\|\sum_{\mu:(j,\mu)\in\mathfrak{P}(\kappa)}2^{(b_{\kappa}+N)(n+1/q')}2^k\langle H_{\kappa},\psi_{j,K_0\mu}\rangle
\phi_k*\psi_{j,K_0\mu}\bigg\|^q_{L^q([-1,K_0])}\bigg)^{1/q}.    
\end{equation}
Setting
\[\zeta_{\kappa,\tau,\rho}(x)=\eta(2^{b_{\kappa}+\tau}(x-2^{N+2-b_{\kappa}}K_0\rho)),\]
\noindent we can write
\[\langle H_{\kappa},\psi_{k,K_0\mu}\rangle=\sum_{0\leq\tau\leq n-1}2^{(\tau-N)(n+1)}\sum_{\substack{\rho\in \NN:\\0<2^{N+2-b_{\kappa}}\rho<1}}\langle \zeta_{\kappa,\tau,\rho},\psi_{k,K_0\mu}\rangle.\]
We recall that by \eqref{eq end point pts}, $\mu$ is of the form $\mu=\mu_n=2^{k-b_{\kappa}+N+2}m$ for some $m\in \NN\cup\{0\}$. Hence
\begin{align*}
2^k\langle \zeta_{\kappa,\tau,\rho},\psi_{k,K_0\mu_m} \rangle  
&=2^k\int \eta(2^{b_{\kappa}+\tau}(x-2^{N+2-b_{\kappa}}K_0\rho))\psi(2^kx-K_0\mu_m)\,dx\\
&=\int \eta(2^{b_{\kappa}+\tau-k}(y-2^{N+2-b_{\kappa}+k}K_0(\rho-m))\psi(y)\,dy.
\end{align*}
Setting $\lambda=2.2^{N+2-b_{\kappa}+k}K_0(\rho-m)$, we observe that the range of the above integral is contained in $[\frac{\lambda-1}{2},\frac{\lambda+1}{2}]$, owing to the fact that $\eta$ is supported in $[-2^{-5},2^{-5}]$. Hence, we can use the spline formulations \eqref{left wave form} and \eqref{right wave form} for $\psi$ on $[\frac{\lambda-1}{2},\frac{\lambda}{2}]$ and $[\frac{\lambda}{2},\frac{\lambda+1}{2}]$ respectively, and argue in the same way as in the proof of Proposition \ref{inner product lower bound}. 

In the light of the moment cancellation condition \eqref{moment cancel test func} (for $\eta$) and symmetry of the lower order co-efficients of $\psi$ (Lemma \ref{lemma wavelet prop}, \ref{coeff lower}), the integrals involving the lower degree terms cancel. We then have
\begin{align*}
&\int \eta\bigg(2^{b_{\kappa}+\tau-k}\bigg(y-\frac{\lambda}{2}\bigg)\bigg)\psi(y)\,dy  \\
=& \int_{\frac{\lambda-1}{2}}^{\frac{\lambda}{2}} \eta\bigg(2^{b_{\kappa}+\tau-k}\bigg(y-\frac{\lambda}{2}\bigg)\bigg)A^n_{\lambda-1}\bigg(y-\frac{\lambda}{2}\bigg)^n\,dy + \int_{\frac{\lambda}{2}}^{\frac{\lambda+1}{2}} \eta\bigg(2^{b_{\kappa}+\tau-k}\bigg(y-\frac{\lambda}{2}\bigg)\bigg)A^n_{\lambda}\bigg(y-\frac{\lambda}{2}\bigg)^n\,dy\\
=&2^{-(b_{\kappa}+\tau-k)(n+1)}\bigg[\int_{\frac{-1}{2}}^0 \eta(y)A^n_{\lambda-1}y^n\,dy + \int_{0}^{\frac{1}{2}} \eta(y)A^n_{\lambda}y^n\,dy\bigg].
\end{align*}
Because of the rapid decay of $\psi$, the major contribution comes in the case when $\rho=m$ (in which case $\lambda=0$), provided we choose $\eta$ in a suitable way, so as to prevent unwanted cancellation. It is here that we choose the parity of $\eta$ to our advantage. More precisely, we choose $\eta$ such that
\[
\eta(y)y^n \text{ is }
\begin{cases}
\text{ odd, }\, \text{ if } A^n_{-1} \text{ and } A^n_{0} \text{ are of opposite signs }\\
\text{ even, }\, \text{ if } A^n_{-1} \text{ and } A^n_{0} \text{ are of the same sign.}\\
\end{cases}
\]
Such a choice ensures that for $\lambda=0$, we have
\[\bigg|\int \eta(2^{b_{\kappa}+\tau-k}y)\psi(y)\,dy\bigg|\geq |A^n_0|2^{-(b_{\kappa}+\tau-k)(n+1)}\bigg|\int_{0}^{\frac{1}{2}} \eta(y)y^n\,dy\bigg|>0.\]

When $\lambda\neq 0$, it is a non-zero integer multiple of $K_0$. Hence the exponential decay of the leading co-efficients $A^n_{\lambda-1}$ and $A^n_{\lambda}$ (Lemma \ref{lemma wavelet prop}, \ref{coeff decay}) kicks in, and we get the bound
\[
\bigg|\int \eta\bigg(2^{b_{\kappa}+\tau-k}\bigg(y-\frac{\lambda}{2}\bigg)\bigg)\psi(y)\,dy\bigg|
\leq 8C2^{-(b_{\kappa}+\tau-k)(n+1)}e^{\gamma}e^{-\gamma\abs{\lambda/2}}\left|\int_0^{1/2}\eta(y)y^n\,dy\right| \]
We now combine the two estimates above and use the triangle inequality as in the proof of Lemma \ref{inner product lower bound}. For $K_0$ large enough, this yields
\begin{multline*}
\frac{|A^n_0|}{2}2^{-(b_{\kappa}+\tau-k)(n+1)}\bigg|\int_{0}^{\frac{1}{2}} \eta(y)y^n\,dy\bigg|\leq \bigg|2^k\sum_{\substack{\rho\in \NN:\\0<2^{N+2-b_{\kappa}}\rho<1}}\langle \zeta_{\kappa,\tau,\rho},\psi_{k,K_0\mu_m}\rangle\bigg|\\
\leq 2|A^n_0|2^{-(b_{\kappa}+\tau-k)(n+1)}\bigg|\int_{0}^{\frac{1}{2}} \eta(y)y^n\,dy\bigg|.   
\end{multline*}
Thus
\[\frac{|A^n_0|}{2}N2^{(k-b_{\kappa}-N)(n+1)}\bigg|\int_{0}^{\frac{1}{2}} \eta(y)y^n\,dy\bigg|\leq 2^k\langle H_{\kappa},\psi_{k,K_0\mu}\rangle\leq 2|A^n_0|N2^{(k-b_{\kappa}-N)(n+1)}\bigg|\int_{0}^{\frac{1}{2}} \eta(y)y^n\,dy\bigg|.\]
The intervals $J_{k,K_0\mu}$ (where $J_{k,K_0\mu}$ is as in \eqref{convol big}) are disjoint. Using Proposition \ref{convolution bound} and arguing as in the non-endpoint case (proof of Proposition \ref{prop main}), we conclude that for $K_0$ large enough, \eqref{eq carry forward} is bounded below by
\begin{align*}
c\bigg(\sum_{\kappa}&\sum_{k\in \mathfrak{A}(\kappa)}2^{kq(-n-1/q')}\\
&\sum_{\substack{m\in \NN:\\0<2^{k+N+2-b_{\kappa}}m<2^k}}\int_{J_{k,\mu_m}}\bigg|2^{(b_{\kappa}+N)(n+1/q')}N2^{(k-b_{\kappa}-N)(n+1)}\int_{0}^{\frac{1}{2}} \eta(y)y^n\,dy\bigg|^q\, dx\bigg)^{1/q}.    
\end{align*}
The measure of $\cup_{m\in \NN:0<2^{k+N+2-b_{\kappa}}m<2^k}J_{k,\mu_m}$ is about $2^{b_{\kappa}-N-k}$. Hence, the above expression is bounded below by
\begin{align*}
&c'\bigg(\sum_{\kappa}\sum_{k\in \mathfrak{A}(\kappa)}2^{kq(-n-1/q')}2^{b_{\kappa}-N-k}
\bigg(2^{(b_{\kappa}+N)(n+1/q')}N2^{(k-b_{\kappa}-N)(n+1)}\bigg)^q\bigg)^{1/q}\\
&\gtrsim \bigg(\sum_{\kappa}\sum_{k\in \mathfrak{A}(\kappa)}2^{-2N}N^q\bigg)^{1/q}\gtrsim NZ^{1/q}.
\end{align*}
This finishes our proof.
\end{proof} 
Finally, we have all the ingredients ready to prove Theorem \ref{final thm}.
\begin{proof}
By Proposition \ref{prop end point main}, there exist $t_1,t_2\in [0,1]$ such that $\|T_{t_1}f_{t_2}\|_{\fspqq}\gtrsim NZ^{1/q}$ but $\|f_{t_2}\|_{\fspqq}\lesssim N^{1/q}.$ Consequently, \[\|T_{t_1}\|_{\fspqq}\gtrsim c_{p,q}N^{1-1/q}Z^{1/q}.\]
Defining $E^{\pm}$ as in \eqref{plus minus sets}, we get
\[\max_{\pm}\|P_{E^{\pm}}\|_{\fspqq}\geq \frac{c_{p,q}}{2}N^{1-1/q}Z^{1/q}.\]
Thus $\mathcal{G}(\fspqq,A)\gtrsim N^{1/q'}Z^{1/q}.$
\end{proof}

\bibliography{main}
\end{document}